\newtheorem{thm}{Theorem}[section]
\newtheorem{lem}[thm]{Lemma}
\newtheorem{cor}[thm]{Corollary}
\newtheorem{prop}[thm]{Proposition}
\newtheorem{rem}[thm]{Remark}
\theoremstyle{definition}
\newtheorem{defn}[thm]{Definition}
\theoremstyle{remark}
\newtheorem{ex}[thm]{Example}
\newcommand{\mc}{\mathcal}
\newcommand{\mf}{\mc}
\newcommand{\ol}{\overline}
\newcommand{\ts}{\textsc}
\newcommand{\Ex}{\exists}
\begin{document}

\title{Monadic ortholattices: completions and duality}

\corrauthor{John Harding}
\address{Department of Mathematical Sciences\\New Mexico State University\\ Las Cruces 88003\\USA}
\email{jharding@nmsu.edu}

\author{Joseph McDonald}
\address{Department of Mathematical Sciences\\New Mexico State University\\ Las Cruces 88003\\USA}
\email{jsmcdon1@ualberta.ca}

\author{Miguel Peinado}
\address{Department of Mathematical Sciences\\New Mexico State University\\ Las Cruces 88003\\USA}
\email{peinadop@nmsu.edu}

\thanks{The first and third listed authors were partially supported by US Army grant W911NF-21-1-0247 and the first author was also partially supported by NSF grant DMS-2231414. The second author was supported by CGS-D MSFSS grant no. 771-2023-0044 and CGS-D SSHRC grant no. 767-2022-1514.}

\subjclass{06C15, 06B23 06E15.}

\keywords{Monadic ortholattice, MacNeille completion, canonical completion, duality.}
\begin{abstract} 
We show that the variety of monadic ortholattices is closed under MacNeille and canonical completions. In each case, the completion of $L$ is obtained by forming an associated dual space $X$ that is a monadic orthoframe. This is a set with an orthogonality relation and an additional binary relation satisfying certain conditions. For the MacNeille completion, $X$ is formed from the  non-zero  elements of $L$, and for the canonical completion, $X$ is formed from the proper filters of $L$. The corresponding completion of $L$ is then obtained as the ortholattice of bi-orthogonally closed subsets of $X$ with an additional operation defined through the binary relation of $X$. 

With the introduction of a suitable topology on an orthoframe, as was done by Goldblatt and Bimb\'o, we obtain a dual adjunction between the categories of monadic ortholattices and monadic orthospaces. A restriction of this dual adjunction provides a dual equivalence. 
\end{abstract}

\maketitle

\section{Introduction}
Monadic algebras were introduced by Halmos \cite{halmos} as an algebraic realization of the one-variable fragment of first-order logic. A monadic algebra is a Boolean algebra with an additional unary operation $\Ex$, called a quantifier, whose closed elements are a Boolean subalgebra. Halmos' polyadic algebras \cite{halmos} had a family of interrelated quantifiers and played the same role for full first-order logic. At about the same time, Henkin, Monk, and Tarski \cites{tarski1,tarski2} introduced the closely related cylindric algebras as algebraic models of first-order logic. These too had a family of quantifiers, related in a somewhat different way than in polyadic algebras. They showed that each monadic algebra and each cylindric algebra can be embedded into a complete, atomic one. These were among the results that grew into the theory of canonical extensions of Boolean algebras with operators \cites{jonsson,jonssonII}. The approach was to show that the quantifier of a monadic algebra yields an equivalence relation on its set of ultrafilters, and the powerset of this relational structure is then a complete atomic monadic algebra extending the original. Including the Stone topology into this process yields a duality between monadic algebras and Stone spaces equipped with a compatible equivalence relation. 

An \emph{ortholattice} (abbrev.: \ts{ol}) is a bounded lattice with an order-inverting period two complementation. A \emph{monadic} \ts{ol} is an ortholattice with a quantifier, a closure operation whose closed elements are a sub-\ts{ol}. Janowitz \cite{janowitz} first considered quantifiers on orthomodular lattices, and Harding \cite{harding} studied them, and cylindric \ts{ol}s, for their connections to von Neumann algebras, in particular, to subfactors. The broad purpose of this note is to conduct a study for monadic \ts{ol}s similar to that described for monadic algebras. We use a number of tools for this purpose. 

An \emph{orthoframe} (abbrev.: \ts{of}) is a set $X$ with an \emph{orthogonality relation}, a binary relation $\perp$ that is irreflexive and symmetric. Orthogonality relations are special examples of the polarities described by Birkhoff \cite{BLT}. It is known that the bi-orthogonally closed sets of an \ts{of} form a complete \ts{ol}. There are two well-used ways to construct an orthoframe from an \ts{ol}: with $X$ the set of non-zero elements of $L$, which we call \emph{MacLaren's} \ts{of} (see \cite{maclaren}), and with $X$ the set of proper filters of $L$, which we call \emph{Goldblatt's} \ts{of} (see \cite{goldblatt}).

In \cite{goldblatt}, Goldblatt introduced a topology on what we call the Goldblatt frame of an \ts{ol} $L$. This has all sets $h(a)=\{x:a\in x\}$ for $a\in L$, and their set-theoretic complements, as a sub-basis. Goldblatt showed that this yields a Stone topology, and that the clopen bi-orthogonally closed sets of the Goldblatt frame form an \ts{ol} that is isomorphic to $L$. Bimb\'o \cite{bimbo} introduced \emph{orthospaces} (abbrev.: \ts{os})
as certain \ts{of}s with a Stone topology and order. She defined morphisms between \ts{os}s and thought to have produced a duality between the category of ortholattices and their homomorphisms and the category of orthospaces and their morphisms. We show that what is produced in \cite{bimbo} a dual adjunction and with an additional condition on \ts{os}s called \emph{ortho-sobriety}, a dual equivalence is obtained \cite{dmitrieva}. 

Harding \cite{harding} defined \emph{monadic orthoframes} to be \ts{of}s with an additional binary relation satisfying certain conditions. He showed that the bi-orthogonally closed elements of a monadic \ts{of} form a monadic \ts{ol}, and that the MacLaren \ts{of} of a monadic \ts{ol} can be turned into a monadic \ts{of} whose bi-orthogonally closed elements contain the original monadic \ts{ol} as a subalgebra. 

In the second section of this note we provide preliminaries. In the third section we show that for a monadic \ts{ol} $L$, the bi-orthogonally closed sets of the monadic \ts{of} constructed in \cite{harding} give the MacNeille completion of $L$ in the sense of \cite{Gehrke}. Thus, the variety of monadic \ts{ol}s is closed under MacNeille completions, hence by \cite{Gehrke} it is also closed under canonical completions. We next provide a similar description of the canonical extension of $L$ via monadic \ts{of}s. As a first step, we show that the bi-orthogonally closed sets of the Goldblatt \ts{of} of an \ts{ol} is its canonical completion. Then we construct from a monadic \ts{ol} $L$ a monadic \ts{of} structure on its Goldblatt \ts{of} and show that the bi-orthogonally closed subsets of this monadic \ts{of} yield the canonical extension of $L$.

In the fourth section we consider orthospaces. We provide an example to show that there is not a dual equivalence between \ts{ol}'s and \ts{os}'s, and use the remainder of the section to show that there is a dual adjunction between these categories. This dual adjunction restricts to a dual equivalence with the additional condition of \emph{ortho-sobriety} on an \ts{os} as was pointed out by Dmitrieva in \cite{dmitrieva}. 

In the final section, we adapt this adjunction to the setting of monadic orthoframes and call the resulting structures \emph{monadic orthospaces}. We then show that there is a dual adjunction between the categories of monadic \ts{ol}s and monadic \ts{os}'s and that this provides a dual equivalence when restricted to the full sub-category of monadic \ts{os}s consisting of ortho-sober monadic \ts{os}s.

\section{Preliminaries}

\begin{defn}
An  \emph{ortholattice} $(L,\wedge,\vee,',0,1)$ is a bounded lattice with an order-inverting period two complementation. A \emph{monadic} \ts{ol} is an \ts{ol} with a quantifier $\Ex$, i.e. a closure operator where the orthocomplement of a closed element is closed.
\end{defn}

For an \ts{ol} $(L,\wedge,\vee,',0,1)$ we use $L$ to denote both the \ts{ol} and its underlying set since this will not cause confusion. We let $L^*$ be the set of non-zero elements of $L$ and $\mf{F}(L)$ be the set of proper, non-empty filters of $L$ ordered by set inclusion. We use letters such as $a,b,c,etc.$ for elements of $L$ and $x,y,z,etc.$ for elements of $\mf{F}(L)$.
\begin{defn}
Let $L$ be an \ts{ol}. For $a,b\in L^*$ set $a\perp b$ iff $a\leq b'$, and for $x,y\in\mf{F}(L)$ set $x\perp y$ iff there is $a\in L^*$ with $a\in x$ and $a'\in y$. 
\end{defn}

It is obvious that both relations are irreflexive and symmetric. 

\begin{defn}
Call $(L^*,\perp)$ the \emph{MacLaren} \ts{of} of $L$ and $(\mf{F}(L),\perp)$ the \emph{Goldblatt} \ts{of} of $L$. 
\end{defn} 

For an \ts{of} $(X,\perp)$ we use $X$ to denote both the \ts{of} and its underlying set since this will not cause confusion. For $S\subseteq X$ its \emph{orthogonal} is $S^\perp=\{y\in X:x\perp y\mbox{ for all }x\in S\}$, and its \emph{bi-orthogonal} is $S^{\perp\perp}$. Call $S$ \emph{bi-orthogonally closed} if $S=S^{\perp\perp}$. 

\begin{defn}
Let $\mc{B}(X)$ be the set of bi-orthogonally closed subsets of an \ts{of} $X$.    
\end{defn}

It is well-known that this is a complete \ts{ol} with partial ordering of set inclusion and with the orthocomplement of $S$ given by $S^\perp$. In this \ts{ol} meets are given by intersections, joins by the bi-orthogonal of the union, and the bounds are the emptyset and $X$. 

\begin{prop}\label{stuff}
Suppose $L$ is an \ts{ol}. Then there is an \ts{ol} embedding $g:L\to \mc{B}(L^*,\perp)$ with $g(a)=\{b\in L^*:b\leq a\}$, and an \ts{ol}-embedding $h:L\to\mc{B}(\mf{F}(L),\perp)$ with $h(a)=\{x:a\in x\}$. 
\end{prop}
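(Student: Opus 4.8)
The plan is to treat both embeddings in parallel, since in each case the crux is a single computation of an orthogonal. First I would reduce the homomorphism conditions: to verify that a map $f\colon L\to\mc{B}(X,\perp)$ is an \ts{ol}-embedding it suffices to show that $f$ preserves $0$, $1$, $\wedge$ and $'$, and is injective. Join preservation is then automatic, because both $L$ and $\mc{B}(X,\perp)$ are ortholattices in which $x\vee y=(x'\wedge y')'$, so $f(a\vee b)=f((a'\wedge b')')=f(a'\wedge b')^\perp=(f(a)^\perp\cap f(b)^\perp)^\perp=f(a)\vee f(b)$, using that meets in $\mc{B}$ are intersections. Preservation of $0$ and $1$ is immediate: $g(0)=h(0)=\emptyset$ since $L^*$ has no element below $0$ and every proper filter omits $0$, while $g(1)=L^*$ and $h(1)=\mf{F}(L)$ since every nonzero element lies below $1$ and every filter contains $1$.

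The heart of the argument is the claim that $g(a)^\perp=g(a')$ and $h(a)^\perp=h(a')$ for every $a\in L$. These identities do double duty: applying $\perp$ once more yields $g(a)^{\perp\perp}=g(a'')=g(a)$ and likewise for $h$, so the images are genuinely bi-orthogonally closed and the maps land in $\mc{B}$; and they say directly that $g$ and $h$ preserve $'$. For $g$, I would unfold the definition $b\perp c\iff b\leq c'$: a nonzero $c$ lies in $g(a)^\perp$ exactly when $c\leq b'$ for every nonzero $b\leq a$. Taking $b=a$ gives $c\leq a'$ when $a\neq 0$, and conversely $c\leq a'$ forces $c\leq b'$ for all $b\leq a$, since $b\leq a$ implies $a'\leq b'$. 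Hence $g(a)^\perp=\{c\in L^*:c\leq a'\}=g(a')$, with the degenerate cases $a=0,1$ checked against $g(0)=\emptyset$ and $g(1)=L^*$.

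For $h$ the inclusion $h(a')\subseteq h(a)^\perp$ is the easy one: if $a'\in y$ and $a\in x$ then $a$ itself, being nonzero, witnesses $x\perp y$. The main obstacle is the reverse inclusion $h(a)^\perp\subseteq h(a')$, and here I would exploit principal filters. Assuming $a\neq 0$, the principal filter $[a)=\{b:a\leq b\}$ is proper and contains $a$, so any $y\in h(a)^\perp$ satisfies $y\perp [a)$; this produces $c\in L^*$ with $a\leq c$ and $c'\in y$, whence $c'\leq a'$ and up-closure of $y$ gives $a'\in y$. The case $a=0$, where $h(0)=\emptyset$ and $h(0)^\perp=\mf{F}(L)=h(1)$, is handled directly.

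Finally, order embedding (hence injectivity) and meet preservation are routine once the above is in place. For order embedding, $a\leq b$ clearly gives $g(a)\subseteq g(b)$ and $h(a)\subseteq h(b)$ by up-closure of filters, while the converse uses that $a\in g(a)$ for $a\neq 0$ and $[a)\in h(a)$, so membership of these witnesses in the image of $b$ forces $a\leq b$. Meet preservation follows from $c\leq a\wedge b\iff c\leq a$ and $c\leq b$ for $g$, and from the fact that a filter contains $a\wedge b$ iff it contains both $a$ and $b$ for $h$, again using that meets in $\mc{B}$ are intersections. Collecting these facts gives that $g$ and $h$ are \ts{ol}-embeddings.
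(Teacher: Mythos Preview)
Your argument is correct. The paper does not actually prove this proposition: it states it as a known fact, citing MacLaren for the embedding $g$ and implicitly Goldblatt for $h$, and moves on. So there is no proof in the paper to compare against; you have supplied the standard verification that the authors chose to omit. Your key computations $g(a)^\perp=g(a')$ and $h(a)^\perp=h(a')$ are exactly the ones used elsewhere in the paper (for instance in the compactness argument of Proposition~3.3, where $h(a)^\perp=h(a')$ is invoked without comment), so your approach is entirely in line with what the authors take for granted.
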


It is known \cite{maclaren} that $g:L\to\mc{B}(L^*,\perp)$ is the MacNeille completion of $L$. In the next section, we show that $h:L\to\mc{B}(\mf{F}(L),\perp)$ is the canonical extension of $L$ \cite{harding2}. In the following, for a binary relation $R$ on a set $X$ and $A\subseteq X$, we denote the relational image of $A$ by $R[A]=\{y\in X:xRy$ for some $x\in A\}$. 

\begin{defn}\label{monadic orthoframe}
A \emph{monadic orthoframe} is a triple $(X,\perp,R)$ where $(X,\perp)$ is an \ts{of} and $R$ is a reflexive, transitive binary relation on $X$ that satisfies $R[R[\{x\}]^{\perp}]
\subseteq R[\{x\}]^{\perp}$ for all $x\in X$. 
\end{defn}

The following results were established by Harding in \cite{harding}.

\begin{prop}
For $X$ a monadic \ts{of}, its bi-orthogonally closed subsets $\mc{B}(X)$ form a monadic \ts{ol} under the quantifier $\Ex A = R[A]^{\perp\perp}$. 
\end{prop}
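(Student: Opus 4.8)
The plan is to verify the two requirements in the definition of a monadic \ts{ol}: that $\Ex A = R[A]^{\perp\perp}$ is a closure operator on the complete \ts{ol} $\mc{B}(X)$, and that the orthocomplement of each closed element is again closed. Since $R[A]^{\perp\perp}$ is bi-orthogonally closed for every $A \subseteq X$, the operation $\Ex$ automatically maps $\mc{B}(X)$ into itself, so no well-definedness issue arises. Monotonicity is immediate from monotonicity of $A \mapsto R[A]$ and of $S \mapsto S^{\perp\perp}$, and for extensivity I would use reflexivity of $R$ to get $A \subseteq R[A] \subseteq R[A]^{\perp\perp} = \Ex A$. This leaves idempotence and the orthocomplement condition.

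The single lemma driving both of these is the claim that for every $S \subseteq X$ the set $R[S]^\perp$ is \emph{$R$-closed}, meaning $R[R[S]^\perp] \subseteq R[S]^\perp$. First I would write $R[S] = \bigcup_{s \in S} R[\{s\}]$, so that $R[S]^\perp = \bigcap_{s \in S} R[\{s\}]^\perp$. Each $R[\{s\}]^\perp$ is $R$-closed by the defining condition $R[R[\{s\}]^\perp] \subseteq R[\{s\}]^\perp$ of a monadic \ts{of}, and an intersection of $R$-closed sets is plainly $R$-closed, so the lemma follows.

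From the lemma I would next deduce that $R[S]^{\perp\perp}$ is itself $R$-closed, which I expect to be the main obstacle. Fix $v \in R[S]^{\perp\perp}$ and $w$ with $vRw$, and let $u \in R[S]^\perp$; the goal is $w \perp u$. Since $R[S]^\perp$ is $R$-closed, every $R$-successor of $u$ again lies in $R[S]^\perp$ and is hence orthogonal to $v$, that is, $v \in R[\{u\}]^\perp$. Applying the monadic \ts{of} condition with $x = u$ together with $vRw$ then gives $w \in R[\{u\}]^\perp$, and reflexivity of $R$ (so that $u \in R[\{u\}]$) yields $w \perp u$, as wanted. Taking $S = A$, this shows $R[\Ex A] \subseteq \Ex A$; applying $(-)^{\perp\perp}$ and using that $\Ex A$ is bi-orthogonally closed gives $\Ex \Ex A = R[\Ex A]^{\perp\perp} \subseteq \Ex A$, which with extensivity is idempotence.

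For the final condition, I would observe that a fixed point $C = \Ex C$ is exactly a bi-orthogonally closed, $R$-closed set, and that for such a $C$ one has $C = R[C]^{\perp\perp}$, whence $C^\perp = R[C]^\perp$. The lemma then shows $C^\perp$ is $R$-closed, and as it is also bi-orthogonally closed it is a fixed point of $\Ex$; thus orthocomplements of closed elements are closed. The heart of the argument is the propagation of orthogonality along $R$ in the middle paragraph, where the extra relation condition in the definition of a monadic \ts{of} is used essentially; everything else is routine.
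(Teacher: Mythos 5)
Your proof is correct. Note that the paper itself states this proposition without proof, citing Harding's \emph{Quantum monadic algebras}, so there is no in-text argument to compare against; but your verification is complete and self-contained. The decomposition is the natural one: the lemma that $R[S]^\perp$ is $R$-closed for arbitrary $S$ (obtained by writing $R[S]^\perp=\bigcap_{s\in S}R[\{s\}]^\perp$ and invoking the frame axiom on each singleton) is exactly the right way to globalize the pointwise condition $R[R[\{x\}]^\perp]\subseteq R[\{x\}]^\perp$, and your middle paragraph correctly propagates this to $R[S]^{\perp\perp}$, which is the only nontrivial step. The identification of the fixed points of $\Ex$ with the bi-orthogonally closed $R$-closed sets then disposes of both idempotence and the condition that orthocomplements of closed elements are closed.
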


\begin{prop}
For $L$ a monadic \ts{ol}, the relation $R$ on $L^*$ defined by $aRb$ iff $b\leq \Ex a$ makes $(L^*,\perp,R)$ a monadic \ts{of}, and the map $g:L\to\mc{B}(L^*,\perp,R)$ is a monadic \ts{ol} embedding.  
\end{prop}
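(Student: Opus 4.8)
The plan is to treat the statement as two separate verifications: that the triple $(L^*,\perp,R)$ meets the three requirements of Definition~\ref{monadic orthoframe}, and that the \ts{ol} embedding $g$ of Proposition~\ref{stuff} additionally preserves the quantifier. Throughout I would exploit the two ingredients bundled into the notion of a quantifier: that $\Ex$ is a closure operator, so extensive, monotone, and idempotent, and that $a$ closed forces $a'$ closed. A preliminary remark is that $1=\Ex 1$ is closed, whence $0=1'$ is closed and $\Ex 0 = 0$; this handles the degenerate cases that crop up below.

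The frame conditions on $R$ are quickly disposed of. Reflexivity $aRa$ is exactly extensivity $a\leq\Ex a$, and transitivity follows from monotonicity and idempotence, since $b\leq\Ex a$ and $c\leq\Ex b$ give $c\leq\Ex b\leq\Ex\Ex a=\Ex a$. The real work rests on two identities. First, directly from the definition of $R$, $R[\{a\}]=\{b\in L^*:b\leq\Ex a\}=g(\Ex a)$. Second, a short union argument gives $R[g(c)]=g(\Ex c)$ for every $c\in L$: the inclusion $\subseteq$ uses monotonicity of $\Ex$, and $\supseteq$ comes from taking the greatest element $c$ of the downset $g(c)$ (the case $c=0$ being the empty set).

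The condition I expect to be the crux is $R[R[\{x\}]^\perp]\subseteq R[\{x\}]^\perp$, and this is where the axiom on complements of closed elements earns its keep. Since $g$ is an \ts{ol} embedding it preserves orthocomplementation, so $R[\{a\}]^\perp=g(\Ex a)^\perp=g((\Ex a)')$. Feeding this into the second identity yields $R[R[\{a\}]^\perp]=R[g((\Ex a)')]=g\big(\Ex((\Ex a)')\big)$. Now because $\Ex a$ is closed, so is $(\Ex a)'$, hence $\Ex((\Ex a)')=(\Ex a)'$ and the whole expression collapses to $g((\Ex a)')=R[\{a\}]^\perp$. Thus one in fact obtains equality, which is more than the required containment. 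I would be careful here only to note that this computation avoids any infinite meet or join — the downsets involved all have greatest elements — so that it remains valid in an \ts{ol} that need not be complete.

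It remains to show $g$ preserves the quantifier, i.e.\ $\Ex g(a)=g(\Ex a)$, where on $\mc{B}(L^*,\perp)$ one has $\Ex A=R[A]^{\perp\perp}$. Using the identity $R[g(a)]=g(\Ex a)$, this becomes $\Ex g(a)=g(\Ex a)^{\perp\perp}$; but $g(\Ex a)\in\mc{B}(L^*,\perp)$ by Proposition~\ref{stuff}, so it is bi-orthogonally closed and $g(\Ex a)^{\perp\perp}=g(\Ex a)$. Since $g$ was already an \ts{ol} embedding, this last preservation property is exactly what upgrades it to a monadic \ts{ol} embedding, completing the argument.
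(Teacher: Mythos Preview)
Your argument is correct in every detail: reflexivity and transitivity of $R$ follow from the closure-operator axioms, the identities $R[\{a\}]=g(\Ex a)$ and $R[g(c)]=g(\Ex c)$ are established cleanly, and the frame condition $R[R[\{a\}]^\perp]\subseteq R[\{a\}]^\perp$ reduces, via $g((\Ex a)')$ and the closedness of $(\Ex a)'$, to an equality. The quantifier preservation $\Ex g(a)=g(\Ex a)$ then follows immediately since $g(\Ex a)$ is already bi-orthogonally closed.

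There is nothing to compare against in the paper itself: the proposition is stated in the preliminaries with the sentence ``The following results were established by Harding in \cite{harding}'' and no proof is given. Your write-up therefore supplies what the paper merely cites; the approach you take---computing $R[\{a\}]^\perp$ as a principal downset and invoking the closedness axiom for $(\Ex a)'$---is the natural one and is essentially what one finds in \cite{harding}.
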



\section{MacNeille and canonical completions}

For $L$ a bounded lattice, an $n$-ary operation $f:L^n\to L$ is called \emph{monotone} if in each coordinate it either preserves or reverses order. Implication of a Heyting algebra is monotone, and both the quantifier and orthocomplementation of a monadic \ts{ol} are monotone. A lattice with additional operations is monotone if each of its operations is monotone, and a variety of lattices with additional operations is monotone if each of its members is monotone. There is a theory of completions of lattices with monotone operations that we describe in the restricted case of the variety of monadic \ts{ol}s. 

For a bounded lattice $L$, its \emph{MacNeille completion} is a pair $(e,\ol{L})$ where $\ol{L}$ is a complete lattice, $e:L\to\ol{L}$ is a lattice embedding, and each element of $\ol{L}$ is both a join and a meet of elements of the image of $L$. For a monadic \ts{ol} $L$, its MacNeille completion is the bounded lattice $\ol{L}$ with unary operations $\ol{'}$ and $\ol{\Ex}$ defined by 
\begin{align*}
x^{\ol{'}}&=\bigwedge\{e(a'):e(a)\leq x\}\\
\ol{\Ex}x &= \bigvee\{e(\Ex a):e(a)\leq x\}
\end{align*}

\begin{prop}
For $L$ a monadic \ts{ol}, $g:L\to\mc{B}(L^*,\perp,R)$ is its MacNeille completion. 
\end{prop}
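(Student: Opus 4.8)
The plan is to reduce to the ortholattice case already settled by MacLaren and then match only the quantifier. By the cited result of \cite{maclaren}, $g\colon L\to\mc{B}(L^*,\perp)$ is the MacNeille completion of the underlying \ts{ol} of $L$; thus $\mc{B}(L^*,\perp)$ is a complete lattice into which $g$ embeds $L$ both join- and meet-densely, its orthocomplement $A\mapsto A^\perp$ realizes the operation $\ol{'}$ given by the displayed formula, and we may take $\ol{L}=\mc{B}(L^*,\perp)$ and $e=g$. Hence it remains only to check that the quantifier $\Ex A=R[A]^{\perp\perp}$ of $\mc{B}(L^*,\perp,R)$ coincides with $\ol{\Ex}$, that is, that for every $A\in\mc{B}(L^*,\perp)$ we have $R[A]^{\perp\perp}=\bigvee\{g(\Ex a):g(a)\leq A\}$, the join being taken in $\mc{B}(L^*,\perp)$.

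The key observation I would isolate first is that every bi-orthogonally closed $A\subseteq L^*$ is a down-set. Indeed, if $b\in A$ and $c\in L^*$ with $c\leq b$, then each $y\in A^\perp$ satisfies $b\leq y'$, so $c\leq y'$, i.e. $c\perp y$; therefore $c\in A^{\perp\perp}=A$. Since $g(a)=\{c\in L^*:c\leq a\}$ contains $a$ and $A$ is a down-set, this yields, for $a\in L^*$, the equivalence $g(a)\subseteq A\iff a\in A$. In other words, the index set $\{a\in L^*:g(a)\subseteq A\}$ appearing in the formula for $\ol{\Ex}$ is exactly $A$ itself.

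With this in hand the computation is immediate. Recall $aRb$ iff $b\leq\Ex a$, and note $\Ex a\neq 0$ since $a\leq\Ex a$ and $a\neq 0$, so $g(\Ex a)=\{b\in L^*:b\leq\Ex a\}$. Using the equivalence above,
\[
\bigcup\{g(\Ex a):g(a)\subseteq A\}=\{b\in L^*:b\leq\Ex a\text{ for some }a\in A\}=R[A].
\]
Because joins in $\mc{B}(L^*,\perp)$ are the bi-orthogonal closure of the union, applying $\perp\perp$ to both sides gives $\bigvee\{g(\Ex a):g(a)\leq A\}=R[A]^{\perp\perp}$, which is precisely the identity $\ol{\Ex}=\Ex$ on $\mc{B}(L^*,\perp,R)$. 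Combined with the first paragraph, this shows $g$ is the MacNeille completion of $L$.

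The only substantive step is the down-set lemma, which collapses the two descriptions of the quantifier to the same subset of $L^*$ before closure, so no genuine lattice-theoretic estimate is needed; I expect this to be the main (and essentially the sole) obstacle. The remaining care is routine: the boundary case $A=\emptyset$, where both $\bigcup\{g(\Ex a):g(a)\subseteq A\}$ and $R[\emptyset]$ are empty, and confirming that $\ol{'}$ is realized by $A\mapsto A^\perp$ — the latter also following from the down-set lemma, since $\bigcap\{g(a'):a\in A\}=\{b\in L^*:b\leq a'\text{ for all }a\in A\}=A^\perp$.
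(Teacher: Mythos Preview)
Your proof is correct and follows essentially the same approach as the paper: both reduce to the known \ts{ol}-level result of MacLaren, then verify the quantifier formula by observing that a bi-orthogonally closed $A$ is a down-set (what the paper phrases as ``$A$ is a normal ideal''), so that the indexing set $\{a:g(a)\leq A\}$ collapses to $A$ and the relevant union becomes $R[A]$ before taking $\perp\perp$. Your write-up is somewhat more explicit---you spell out the down-set argument, the equivalence $g(a)\subseteq A\iff a\in A$, and the boundary/orthocomplementation checks---while the paper routes the computation through $R[g(a)]^{\perp\perp}$ and the identity $\bigcup\{g(a):a\in A\}=A$, but the content is the same.
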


\begin{proof}
On the \ts{ol} level this is well known \cite{maclaren}. It remains to show that for $A\subseteq L^*$ bi-orthogonally closed, i.e. for a normal ideal $A$ of $L^*$, we have\[R[A]^{\perp\perp}=\bigvee\{R[g(a)]^{\perp\perp}:a\in A\}.\]
Since the join of bi-orthogonally closed sets is given by the closure of their union, the right side of this expression is equal to $(\bigcup\{R[g(a)]^{\perp\perp}:a\in A\})^{\perp\perp}$. By general principles, this in turn is equal to $(\bigcup\{R[g(a)]:a\in A\})^{\perp\perp}$, hence to $R[\bigcup\{g(a):a\in A\}]^{\perp\perp}$. But $g(a)$ is the principle ideal generated by $a$ and $A$ is a normal ideal, so $\bigcup\{g(a):a\in A\}=A$. 
\end{proof}

\begin{defn}
For a bounded lattice $L$, its \emph{canonical completion} is a pair $(e,C)$ where $C$  is a complete lattice and $e:L\to C$ is a bounded lattice embedding that is dense and compact. \emph{Dense} means that each element of $C$ is both a join of meets and a meet of joins of elements of the image of $L$. \emph{Compact} means that if $S,T\subseteq L$ then 
\[\bigwedge e[S]\leq\bigvee e[T]\quad \Rightarrow\quad \bigwedge e[S']\leq\bigvee e[T']\] 
for some finite $S'\subseteq S$ and $T'\subseteq T$.
\end{defn}

Each lattice has up to isomorphism a unique canonical completion, and we call this \emph{the} canonical completion, denoted by $L^\sigma$. An element of the canonical completion that is a meet of elements of the image of $L$ is called closed, and the set of closed elements is $\mc{K}$. For a bounded lattice with additional monotone operations, there are extensions of the operations to the canonical completion. We describe these for orthocomplementation and a quantifier, where we call the extensions $'^\sigma$ and $\Ex^\sigma$, by
\begin{align*}
x^{'^\sigma}&\,\,=\,\,\bigwedge\{\bigvee\{ e(a'): k\leq e(a) \} : k\leq x \mbox{ and } k\in \mc{K} \},        \\
\Ex^\sigma x &\,\,=\,\, \bigvee\{\bigwedge\{ e(\Ex a):k\leq e(a) \} : k\leq x \mbox{ and } k\in \mc{K} \}.        
\end{align*}

\begin{prop}
For $L$ an \ts{ol}, $h:L\to\mc{B}(\mf{F}(L),\perp)$ is its canonical extension. 
\end{prop}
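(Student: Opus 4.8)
The plan is to verify that the embedding $h$ of Proposition~\ref{stuff} is \emph{dense} and \emph{compact}, since completeness of $\mc{B}(\mf{F}(L),\perp)$ and the fact that $h$ is a bounded lattice embedding are already recorded. Two computations drive everything. First, for $a\in L$ one checks directly from the definition of $\perp$ on $\mf{F}(L)$ that $h(a)^\perp=h(a')$: a filter $z$ is orthogonal to every filter containing $a$ precisely when $a'\in z$, the point being that the principal filter generated by $a$ is the smallest filter containing $a$, and $z\perp a$-filters forces (and is forced by) $a'\in z$. Second, for a fixed $x\in\mf{F}(L)$ this yields $\{x\}^\perp=\bigcup_{a\in x}h(a')$ and hence $\{x\}^{\perp\perp}=\bigcap_{a\in x}h(a')^\perp=\bigcap_{a\in x}h(a)=\{y\in\mf{F}(L):x\subseteq y\}$, the up-set of $x$. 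In particular each $\{x\}^{\perp\perp}$ is a meet of images of $h$, i.e.\ a closed element of $\mc{B}(\mf{F}(L),\perp)$.

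For density, fix $A\in\mc{B}(\mf{F}(L),\perp)$. Since $\{x\}\subseteq A$ for each $x\in A$ and $A$ is bi-orthogonally closed, $\{x\}^{\perp\perp}\subseteq A$; taking the join over all $x\in A$ and using that joins in $\mc{B}(\mf{F}(L),\perp)$ are bi-orthogonal closures of unions gives $A=\bigvee_{x\in A}\{x\}^{\perp\perp}$. By the first paragraph this exhibits $A$ as a join of meets of images. The dual representation is then free from the orthocomplement: applying the order-reversing dual automorphism $B\mapsto B^\perp$ to the identity $A^\perp=\bigvee_{y\in A^\perp}\{y\}^{\perp\perp}$ and using $(\bigvee_i B_i)^\perp=\bigwedge_i B_i^\perp$ gives $A=\bigwedge_{y\in A^\perp}\{y\}^\perp$, where $\{y\}^\perp=(\bigwedge_{a\in y}h(a))^\perp=\bigvee_{a\in y}h(a')$ is a join of images. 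Hence $h$ is dense.

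For compactness, suppose $\bigwedge h[S]\leq\bigvee h[T]$ with $S,T\subseteq L$. Using $h(t)^\perp=h(t')$ and $(\bigvee_i B_i)^\perp=\bigcap_i B_i^\perp$, I rewrite the two sides as $\bigwedge h[S]=\{x\in\mf{F}(L):S\subseteq x\}$ and $(\bigvee h[T])^\perp=\{y\in\mf{F}(L):\{t':t\in T\}\subseteq y\}$. Since for bi-orthogonally closed $B$ one has $A\subseteq B$ iff every member of $A$ is orthogonal to every member of $B^\perp$, the hypothesis says exactly that every proper filter containing $S$ is orthogonal to every proper filter containing $\{t':t\in T\}$. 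Writing $F=\<S\>$ and $G=\<\{t':t\in T\}\>$ for the generated filters, and assuming for now that both are proper, instantiating $x=F$ and $y=G$ gives $F\perp G$, i.e.\ there are $a\in F$ and $b\in G$ with $a\wedge b=0$. Finite generation of filters produces finite $S'\subseteq S$ and $T'\subseteq T$ with $a\geq\bigwedge S'$ and $b\geq\bigwedge_{t\in T'}t'$, so $\bigwedge S'\wedge\bigwedge_{t\in T'}t'=0$, equivalently $\bigwedge S'\leq\bigvee T'$. Running the same translation over these finite sets then gives $\bigwedge h[S']\leq\bigvee h[T']$, as required.

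The main obstacle is not conceptual but lies in this last extraction: one must handle the degenerate cases in which $\<S\>$ or $\<\{t':t\in T\}\>$ fails to be proper, so that the instantiation step is unavailable. These are harmless and merely need recording—if some finite meet drawn from $S$ is already $0$ then $\bigwedge h[S']=\emptyset$ sits below everything, and if some finite meet drawn from $\{t':t\in T\}$ is $0$ then $\bigvee h[T']$ is already the top element $\mf{F}(L)$—but they are what make the choice of finite witnesses uniform. With density and compactness established, $(h,\mc{B}(\mf{F}(L),\perp))$ is the canonical completion of the underlying bounded lattice; and since $h$ is moreover an \ts{ol} embedding and $A\mapsto A^\perp$ is the orthocomplement of $\mc{B}(\mf{F}(L),\perp)$, a direct comparison with the $\sigma$-extension formula shows this dual automorphism realizes the canonical extension of the orthocomplementation, so $h$ is the canonical extension of $L$ as an \ts{ol}.
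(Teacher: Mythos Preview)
Your proof is correct and follows essentially the same route as the paper: both arguments compute $\bigwedge h[S]=\{x:S\subseteq x\}$, show density by writing an arbitrary $A\in\mc{B}$ as $\bigvee_{x\in A}\bigcap_{a\in x}h(a)$ and then invoking De~Morgan for the dual form, and prove compactness by taking the filter generated by $S$ together with the filter generated by $\{t':t\in T\}$ (equivalently, the paper's ideal generated by $T$), extracting an orthogonality between them, and reading off finite witnesses. Your preliminary computations $h(a)^\perp=h(a')$ and $\{x\}^{\perp\perp}={\uparrow}x$ make explicit what the paper uses implicitly, but the substance is the same.
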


\begin{proof}
For convenience, we use $\mc{B}$ for $\mc{B}(\mf{F}(L),\perp)$. Recall that for any $a\in L$, that $h(a)$ is the set of all proper filters of $L$ that contain $a$. 
By Proposition~\ref{stuff}, $h:L\to \mc{B}$  is an \ts{ol}-embedding and $\mc{B}$ is complete. Meets in $\mc{B}$ are given by intersections, joins are the bi-orthogonal of the union. In particular, $(h,\mc{B})$ is a completion. For $S\subseteq L$, note that \[\bigwedge h[S] = \bigcap h[S] = \{x:S\subseteq x\}.\] 

To show that this completion is dense, suppose that $A\in\mc{B}$. From the comments above, we have that $A\subseteq \bigcup\{\bigcap h[x]:x\in A\}$. To see equality, suppose $y\in \bigcap h[x]$ for some $x\in A$. Then $x\subseteq y$. Since $A \in\mc{B}$, it is bi-orthogonally closed, and it follows that $A$ is an upset in the filter lattice. Thus, since $x\in A$ we have $y\in A$. So $A=\bigcup\{\bigcap h[x]:x\in A\}$. Since $A$ is bi-orthogonally closed, so is this union, and it follows that $A=\bigvee\{\bigwedge h[x]:x\in A\}$. Thus each element of $\mc{B}$ is a join of meets of elements in the image of $L$. Since $\mc{B}$ is an \ts{ol}, and De~Morgan's laws extend to arbitrary joins and meets in an \ts{ol}, it follows that each element of $\mc{B}$ is also the meet of joins of elements from the image of $L$. Thus this completion is dense.


To see that this completion is compact, suppose that $S,T\subseteq L$ and $\bigwedge h[S] \subseteq \bigvee h[T]$. We must show there are finite $S'\subseteq S$ and $T'\subseteq T$ with $\bigwedge h[S'] \subseteq\bigvee h[T']$. Let $x$ be the filter generated by $S$ and let $I$ be the ideal of $L$ generated by $T$. If $0\in x$ or $1\in I$, the result is trivial, so we assume that both are proper. Set $z=\{a':a\in I\}$ and note that $z$ is a proper filter. It is the filter generated by $\{a':a\in T\}$. Since $S\subseteq x$ we have $x\in\bigwedge h[S]$, hence $x\in\bigvee h[T]$. Note that since $\bigvee h[T]$ is closed, it is equal to $(\bigvee h[T])^{\perp\perp}$. Using this, the generalized De~Morgan's laws, and the fact that $h(a)^\perp=h(a')$, we have 
\[(\bigvee h[T])^{\perp\perp} = (\bigvee\{h(a):a\in T\})^{\perp\perp} = (\bigwedge\{h(a'):a\in T\})^\perp.\]
Since $a'\in z$ for each $a\in T$, we have $z\in\bigwedge\{h(a'):a\in T\}$. Since $x\in \bigvee h[T]$, we then have that $x\perp z$. So there is some $b\in L$ with $b\in x$ and $b'\in z$. Recall that $x$ is the filter generated by $S$. Also, since $z$ is the filter generated by $\{a':a\in T\}$, it follows that $b$ is in the ideal generated by $T$. It follows that there are finite subsets $S'\subseteq S$ and $T'\subseteq T$ with $\bigwedge S'\leq  b\leq\bigvee T'$, hence $\bigwedge h[S']\subseteq\bigvee h[T']$.  

We have shown that $h:L\to\mc{B}$ is the canonical completion of the lattice $L$. It remains to see that the orthocomplementation of $\mc{B}$ is the extension $'^\sigma$ described above. This is a consequence of the general De~Morgan laws and density of the canonical extension. 
\end{proof}

\begin{rem} {\em 
It appears that Goldblatt \cite[p.~47]{goldblatt} claims that if $L$ is a complete \ts{ol}, then $h$ maps $L$ isomorphically onto $\mc{B}(\mf{F}(L),\perp)$. This is not the case, the canonical extension of a complete Boolean algebra is not usually an isomorphism.
}
\end{rem}
\noindent We now consider Goldblatt's \ts{of} in the context of a monadic \ts{ol}.

\begin{prop}\label{stuff2}
Let $L$ be a monadic \ts{ol} and define a binary relation $R$ on its Goldblatt~\ts{of}  by $x\,R\,y$ iff $\Ex[x]\subseteq y$. Then $X=(\mc{F}(L),\perp,R)$ is a monadic \ts{of} and $h:L\to\mc{B}(X)$ is the canonical extension of $L$.    
\end{prop}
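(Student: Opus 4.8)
The plan is to prove the two assertions in turn: first that $(\mc{F}(L),\perp,R)$ meets Definition~\ref{monadic orthoframe}, and then that the induced quantifier $A\mapsto R[A]^{\perp\perp}$ on $\mc{B}(X)$ coincides with the canonical extension $\Ex^\sigma$.

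For the frame axioms, reflexivity and transitivity drop out of $\Ex$ being a closure operator. Since $a\leq\Ex a$ and filters are up-sets, $\Ex[x]\subseteq x$, so $xRx$; and if $\Ex[x]\subseteq y$ and $\Ex[y]\subseteq z$, then for $a\in x$ we get $\Ex a\in y$, whence $\Ex a=\Ex\Ex a\in z$, giving $xRz$. The real content is the axiom $R[R[\{x\}]^{\perp}]\subseteq R[\{x\}]^{\perp}$. Here I would first pin down the orthogonal. Note $R[\{x\}]=\{y:\Ex[x]\subseteq y\}=\bigcap_{a\in x}h(\Ex a)$, and I claim $R[\{x\}]^{\perp}=\bigcup_{a\in x}h((\Ex a)')$. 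One inclusion is direct, since $\Ex a\in y$ for every $y\in R[\{x\}]$, so $(\Ex a)'\in z$ already witnesses $z\perp y$; for the other, I would test $z\in R[\{x\}]^{\perp}$ against the filter generated by $\Ex[x]$ (which is proper because $\Ex a=0$ forces $a=0$) to extract some $a\in x$ with $(\Ex a)'\in z$. With this description, the axiom reduces to the defining property of a monadic \ts{ol}: if $z\in R[\{x\}]^{\perp}$ and $zRv$, choose $a\in x$ with $(\Ex a)'\in z$; as $(\Ex a)'$ is the orthocomplement of the closed element $\Ex a$ it is itself closed, so $(\Ex a)'=\Ex((\Ex a)')\in\Ex[z]\subseteq v$, placing $v$ in $h((\Ex a)')\subseteq R[\{x\}]^{\perp}$.

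Granting that $X$ is a monadic \ts{of}, $\mc{B}(X)$ carries the quantifier $\Ex A=R[A]^{\perp\perp}$, and by the foregoing proposition $h$ is already the canonical extension of the underlying \ts{ol}, with orthocomplement matching $'^\sigma$; so only the identification $R[A]^{\perp\perp}=\Ex^\sigma A$ remains. I would first record that $R[h(a)]=h(\Ex a)$: if $a\in y$ and $yRv$ then $\Ex a\in\Ex[y]\subseteq v$, and conversely every $v$ with $\Ex a\in v$ is $R$-reached from the principal filter of $a$. Next I would use that the closed elements of $\mc{B}(X)$ are exactly the sets $k_x=\{y:x\subseteq y\}=\bigcap_{a\in x}h(a)$ (together with $\emptyset$), with $k_x\leq h(a)$ iff $a\in x$. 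The defining formula then gives $\Ex^\sigma(k_x)=\bigcap_{a\in x}h(\Ex a)=R[\{x\}]$, while $R[k_x]=R[\{x\}]$ directly, so the two quantifiers agree on closed elements. Finally, by density $A=\bigvee\{k_x:x\in A\}$, and since the $k_x$ with $x\in A$ exhaust the nonempty closed elements below $A$,
\[\Ex^\sigma A=\bigvee_{x\in A}\Ex^\sigma(k_x)=\bigvee_{x\in A}R[\{x\}]=\Big(\bigcup_{x\in A}R[\{x\}]\Big)^{\perp\perp}=R[A]^{\perp\perp}=\Ex A,\]
where the middle equalities use that joins in $\mc{B}(X)$ are bi-orthogonal closures of unions and the standard identity $(\bigcup_i S_i^{\perp\perp})^{\perp\perp}=(\bigcup_i S_i)^{\perp\perp}$.

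I expect the computation of $R[\{x\}]^{\perp}$ and the frame axiom to be the main obstacle: this is the only place where the monadic \ts{ol} axiom (closedness of orthocomplements of closed elements) is essential, and it requires care to argue which filters are orthogonal to an entire $R$-image rather than to a single point. Once that description is in place, the matching of the quantifier with $\Ex^\sigma$ is bookkeeping with density and the De~Morgan and closure identities already invoked for the \ts{ol} part.
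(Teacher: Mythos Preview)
Your proposal is correct and follows essentially the same route as the paper: the description $R[\{x\}]^{\perp}=\bigcup_{a\in x}h((\Ex a)')$ obtained by testing against the filter generated by $\Ex[x]$, the use of $(\Ex a)'=\Ex(\Ex a)'$ for the frame axiom, the identification of closed elements as the $k_x$, and the computation $\Ex^\sigma k_x=\bigcap_{a\in x}h(\Ex a)=R[k_x]$ all match the paper's argument. Your treatment of the general case via the single chain $\Ex^\sigma A=\bigvee_{x\in A}R[\{x\}]=R[A]^{\perp\perp}$ is slightly more streamlined than the paper's two-inclusion verification, but the content is the same.
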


\begin{proof}
To show that $X$ is a monadic \ts{of} we must first show that $R$ is reflexive and transitive. For reflexivity, let $a\in\exists[x]$ so that $a=\exists b$ for some $b\in x$. Since $b\leq\exists b$ and $x$ is upward closed, we have $\exists b=a\in x$ and therefore $xRx$.  For transitivity,   assume $xRy$ and $yRz$ so that $\exists[x]\subseteq y$ and $\exists[y]\subseteq z$. Let $a\in\exists[x]$ so that $a=\exists b$ for some $b\in x$. Then $\exists b\in y$ and hence $\exists\exists b\in\exists[y]$ which implies $\exists\exists b\in z$. Then $\exists\exists b=\exists b$ and $a=\exists b$ so $a\in z$. Therefore $\exists[x]\subseteq z$ and hence $xRz$ so we conclude $R$ is transitive.

We now show that $R$ satisfies $R[R[\{x\}]^{\perp}]\subseteq R[\{x\}]^{\perp}$ for all $x\in \mathcal{F}(L)$. Let $z$ be the filter generated by $\Ex[x]$ and note that $z$ is the smallest filter belonging to $R[\{x\}]$. Thus, $R[\{x\}]^\perp = \{z\}^\perp$. 
If $y\in \{z\}^\perp$, then there is $b\in z$ with $b'\in y$. Since $z$ is the filter generated by $\Ex[x]$, there are $a_1,\ldots,a_n\in x$ with $\Ex a_1\wedge\cdots\wedge \Ex a_n\leq b$. Set $a=a_1\wedge\cdots\wedge a_n$. Then $a\in x$ and we have $\Ex a\leq \Ex a_1\wedge\cdots\wedge \Ex a_n\leq b$. Thus $b'\leq (\Ex a)'$ so $(\Ex a)'$ belongs to $y$. It follows that 
\[R[\{x\}]^\perp = \{z\}^\perp = \{y:(\Ex a)'\in y\mbox{ for some }a\in x\}.\]
Suppose $y\in 
R[\{x\}]^\perp$ and $y\,R\,w$. Then there is $a\in x$ with $(\Ex a)'\in y$ and $\Ex[y]\subseteq w$. Since~$L$ is a monadic \ts{ol}, we have $(\Ex a)'=\Ex(\Ex a)'\in w$, giving $w\in R[\{x\}]^\perp$ as required. So $X$ is a monadic \ts{of}. 

For $a\in L$ we have that $h(a)=\{x:a\in x\}$. Then $R[h(a)]=\{x:\Ex a\in x\}$. It follows that $R[h(a)]$ is bi-orthogonally closed, so $h(\Ex a) = R[h(a)] = R[h(a)]^{\perp\perp} = \Ex h(a)$. So, using $\mc{B}$ for $\mc{B}(X)$, we have that $h:L\to\mc{B}$ is a monadic \ts{ol}-embedding. We further know that when restricted to the \ts{ol} reduct, this is the canonical extension. It remains to show that the quantifier that we denote $\Ex_R$ of $\mc{B}$ is the canonical extension $\Ex^\sigma$ of quantifier of $L$, as described above. 

Let $\mc{K}$ be the set of closed elements of $\mc{B}$, that is, those that are a meet of elements in the image of $h$, and for each filter $x$ of $L$ let $K_x=\{y:x\subseteq y\}$. Let $S\subseteq L$. If the filter $x$ generated by $S$ is proper, we have $\bigwedge h[S]=K_x$, and $\bigwedge h[S]=\emptyset$ if $x$ is improper. Thus, each non-empty $K\in\mc{K}$ is of the form $K_x$ for some proper filter $x$ of $L$, and it is easily seen that this $x$ is unique. 

The definition of $\Ex^\sigma$ gives 
\[\Ex^\sigma K_x = \bigwedge \{h(\Ex a):K_x\subseteq h(a)\}.\]
Since $K_x\subseteq h(a)$ iff $a\in x$, $\Ex^\sigma K_x$ is equal to $\bigcap\{h(\Ex a):a\in x\}$, which in turn is equal to $\{y:\Ex[x]\subseteq y\}$, and hence is given by $R[K_x]$. But this set is bi-orthogonally closed since it is the intersection of bi-orthogonally closed sets, so $\Ex^\sigma K_x=R[K_x]^{\perp\perp}=\Ex_R K_x$. 

Suppose that $A$ is any element of $\mc{B}$. Then, the definition of $\Ex^\sigma$ and the result just established for closed elements $K_x$ gives
\[\Ex^\sigma A = \bigvee \{\Ex^\sigma K_x:K_x\leq A\}=\bigvee\{\Ex_RK_x:K_x\leq A\}.\]
Since $\Ex_R$ is order preserving, $\Ex^\sigma A\subseteq \Ex_RA$. To see equality, suppose $y\in R[A]$. Then $x\,R\,y$ for some $x\in A$. Since $A$ is bi-orthogonally closed, it is an upset in the poset of proper filters, so $K_x\subseteq A$. But $y\in R[K_x]=\Ex_R K_x$. Thus $R[A]\subseteq\bigcup\{\Ex_RK_x:K_x\leq A\}$. Using the established fact that $\Ex^\sigma K_x=\Ex_RK_x$ and taking the bi-orthogonal closure of both sides gives $\Ex_RA\leq \bigvee\{\Ex^\sigma K_x:K_x\leq A\}=\Ex^\sigma A$. 
\end{proof}

To conclude this section, we recall that in the classical setting, one associates to a monadic algebra $(B,\Ex)$ a set $X$ with an equivalence relation $S$ on $X$. We show that this path can also be taken with a monadic \ts{ol}. 

\begin{defn}
For $L$ a monadic \ts{ol}, define a relation $S$ on its set $\mf{F}(L)$ of proper filters by $x\,S\,y \mbox{ iff }\Ex[x]=\Ex[y]$.    
\end{defn}

Clearly $S$ is an equivalence relation. To see it's further properties, we use an auxilliary relation $\uparrow$ on $\mf{F}(L)$ where $x\uparrow y$ iff $x\subseteq y$. We then write ${\uparrow}S$ for the composite ${\uparrow}\circ S $. So $x\,{\uparrow}S\,z$ iff there is $y$ with $\Ex[x]=\Ex[y]$ and $y\subseteq z$. 

\begin{lem}
$R={\uparrow}S$. 
\end{lem}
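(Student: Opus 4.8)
The plan is to prove the two inclusions of $R={\uparrow}S$ separately, with the whole argument resting on a single observation that I would record first: for any proper filter $x$, the set $\Ex[x]$ is exactly the set of $\Ex$-closed elements (i.e. fixed points of $\Ex$) lying in $x$, so that $\Ex[x]=\{a\in x:\Ex a=a\}$. One inclusion is immediate, since $\Ex a$ is $\Ex$-closed by idempotence and, being above $a\in x$, lies in the upward-closed set $x$; conversely, if $a\in x$ with $\Ex a=a$, then $a=\Ex a\in\Ex[x]$. I want this first because it converts membership questions about $\Ex[x]$ into questions about which $\Ex$-closed elements belong to the filter. I would also record the standard fact that a finite meet of $\Ex$-closed elements is again $\Ex$-closed, since $\Ex$ is a closure operator.

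For the inclusion ${\uparrow}S\subseteq R$, suppose $x\,{\uparrow}S\,z$, so there is $y$ with $\Ex[x]=\Ex[y]$ and $y\subseteq z$. As each $\Ex b$ dominates $b$ and $y$ is upward closed, $\Ex[y]\subseteq y$, whence $\Ex[x]=\Ex[y]\subseteq y\subseteq z$. Thus $\Ex[x]\subseteq z$, i.e. $x\,R\,z$; this direction uses nothing beyond that $\Ex$ is increasing.

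For the reverse inclusion $R\subseteq{\uparrow}S$, suppose $x\,R\,z$, i.e. $\Ex[x]\subseteq z$, and let $y$ be the filter generated by $\Ex[x]$. Since $\Ex 1=1$ this set is non-empty, and since $\Ex[x]\subseteq z$ with $z$ proper, $y$ is a proper filter contained in $z$; so $y\in\mf{F}(L)$ and $y\,{\uparrow}\,z$. It remains to check $x\,S\,y$, that is $\Ex[y]=\Ex[x]$. The inclusion $\Ex[x]\subseteq\Ex[y]$ holds because each $\Ex a$ with $a\in x$ is an $\Ex$-closed element of $y$, hence lies in $\Ex[y]$ by the opening observation. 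For the reverse inclusion, take $c\in y$; then $c\geq \Ex a_1\wedge\cdots\wedge\Ex a_n$ for some $a_1,\dots,a_n\in x$. Applying $\Ex$ and using that the right-hand side is $\Ex$-closed gives $\Ex c\geq \Ex a_1\wedge\cdots\wedge\Ex a_n$; but each $\Ex a_i$ lies in $x$, so this finite meet lies in $x$, and hence the $\Ex$-closed element $\Ex c$ lies in $x$ by upward closure and therefore in $\Ex[x]$ by the opening observation. This yields $\Ex[y]\subseteq\Ex[x]$, completing $x\,S\,y$ and so $x\,{\uparrow}S\,z$.

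The only genuine obstacle is the inclusion $\Ex[y]\subseteq\Ex[x]$ in the last step: one must rule out that passing to the generated filter manufactures new values of $\Ex$. The characterization $\Ex[x]=\{a\in x:\Ex a=a\}$ together with the closure of the fixed points of $\Ex$ under the existing finite meets is precisely what prevents this, so I would state both facts explicitly before running the computation.
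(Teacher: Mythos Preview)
Your proof is correct and follows essentially the same route as the paper: both directions use the same ideas, and for $R\subseteq{\uparrow}S$ both take the witness $y$ to be the filter $\widehat{x}$ generated by $\Ex[x]$ and verify $\Ex[\widehat{x}]=\Ex[x]$. The only difference is that the paper first observes that $\Ex[x]$ is down-directed (since $x$ is), so $\widehat{x}$ is simply the upset of $\Ex[x]$, which makes the equality $\Ex[\widehat{x}]=\Ex[x]$ essentially immediate and avoids your finite-meet computation.
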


\begin{proof}
Given $x$, let $\widehat{x}$ be the filter generated by $\Ex[x]$ and note that since $x$ is down-directed, $\widehat{x}$ is the upset generated by $\Ex[x]$. Then $x\, R\, z$ iff $\Ex[x]\subseteq z$ iff $\widehat{x}\subseteq z$. Since $\Ex[x]=\Ex[\widehat{x}]$ it follows that $x\,R\,z$ implies $x\, {\uparrow}S\, z$. Conversely, if $x\,{\uparrow}S\,z$, there is $y$ with $\Ex[x]=\Ex[y]$ and $y\subseteq z$. Since $\Ex[x]\subseteq y$, then $\widehat{x}\subseteq y$, hence $\widehat{x}\subseteq z$ and so $x\, R\, z$. 
\end{proof}

\begin{prop}
Let $L$ be a monadic \ts{ol}. Then $Y=(\mc{F}(L),\perp,S)$ is a monadic \ts{of}, $S$ is an equivalence relation, and $h:L\to \mc{B}(Y)$ is the canonical extension of $L$.    
\end{prop}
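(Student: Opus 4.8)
The plan is to reduce every assertion about $Y=(\mc{F}(L),\perp,S)$ to the corresponding, already-established, assertion about $X=(\mc{F}(L),\perp,R)$ by exploiting the previous lemma $R={\uparrow}S$ and Proposition~\ref{stuff2}. The three claims—that $S$ is an equivalence relation, that $Y$ is a monadic \ts{of}, and that $h$ is the canonical extension—then follow, with essentially all of the content concentrated in a single orthogonality fact about the Goldblatt frame. To begin, reflexivity, symmetry, and transitivity of $S$ are immediate from the definition $x\,S\,y$ iff $\Ex[x]=\Ex[y]$, as was already noted.

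The key step is the observation that orthogonals in the Goldblatt frame are insensitive to upward closure in the filter ordering. Indeed, if $w\perp y$ and $y\subseteq z$, then choosing $a$ with $a\in w$ and $a'\in y$ gives $a'\in z$, whence $w\perp z$; it follows that $B^\perp=({\uparrow}B)^\perp$ for every $B\subseteq\mc{F}(L)$, where ${\uparrow}B$ denotes the upward closure of $B$ under inclusion. Now $R={\uparrow}S$ gives $R[A]={\uparrow}(S[A])$ for every $A\subseteq\mc{F}(L)$, since $z\in R[A]$ iff some $y\in S[A]$ satisfies $y\subseteq z$. Applying the preceding fact with $B=S[A]$ yields $S[A]^\perp=R[A]^\perp$, and hence $S[A]^{\perp\perp}=R[A]^{\perp\perp}$.

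With this identity in hand, the monadic \ts{of} axioms for $Y$ and the canonical extension claim are routine. Reflexivity and transitivity of $S$ are already known, and for the remaining axiom I note that $S\subseteq R$ (since $x\,S\,y$ and $y\subseteq y$ give $x\,R\,y$), so $S[T]\subseteq R[T]$ for every $T$; combining $S[\{x\}]^\perp=R[\{x\}]^\perp$ with the fact that $X$ is a monadic \ts{of} gives
\[S[S[\{x\}]^\perp]\subseteq R[R[\{x\}]^\perp]\subseteq R[\{x\}]^\perp=S[\{x\}]^\perp.\]
Finally, the \ts{ol} reduct of $\mc{B}(Y)$ is literally $\mc{B}(\mc{F}(L),\perp)$, the very same \ts{ol} as $\mc{B}(X)$, so $h$ is the canonical extension on the \ts{ol} level by Proposition~\ref{stuff}; and the quantifier $\Ex_S A=S[A]^{\perp\perp}$ coincides with $\Ex_R A=R[A]^{\perp\perp}=\Ex^\sigma A$ by the identity of the previous paragraph together with Proposition~\ref{stuff2}. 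Thus $\Ex_S=\Ex^\sigma$ and $h\colon L\to\mc{B}(Y)$ is the canonical extension. The only place demanding care is the orthogonality-invariance fact underlying $S[A]^{\perp\perp}=R[A]^{\perp\perp}$; once it is isolated, the rest is bookkeeping against Proposition~\ref{stuff2}.
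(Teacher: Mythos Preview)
Your proof is correct and follows essentially the same route as the paper's: isolate the fact $B^\perp=({\uparrow}B)^\perp$ in the Goldblatt frame, combine it with $R={\uparrow}S$ to obtain $S[A]^\perp=R[A]^\perp$, use $S\subseteq R$ to transfer the monadic \ts{of} condition from $X$ to $Y$, and conclude $\Ex_S=\Ex_R$. The only slip is a citation: the claim that $h$ is the canonical extension on the \ts{ol} level is not Proposition~\ref{stuff} (which only asserts that $h$ is an embedding) but the unlabeled proposition preceding Proposition~\ref{stuff2}, or equivalently is subsumed by Proposition~\ref{stuff2} itself.
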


\begin{proof}
Since $x\leq y$ and $x\perp z$ implies $y\perp z$, for any $A\subseteq Y$ we have $A^\perp = ({\uparrow}A)^\perp$. Thus $S[\{x\}]^\perp = ({\uparrow}S[\{x\}])^\perp = R[\{x\}]^\perp$. Then, since $S[A]\subseteq R[A]$ for any subset $A$, we have $$S[S[\{x\}]^\perp]\subseteq R[R[\{x\}]^\perp]\subseteq R[\{x\}]^\perp = S[\{x\}]^\perp.$$ 
Here, the second containment uses the fact that $X=(\mf{F}(L),\perp,R)$ is a monadic \ts{of}. This inequality shows that $Y$ is a monadic \ts{of}, and as noted, $S$ is an equivalence relation. Since the mapping $h$ does not depend on the choice of $R$ or $S$, it is an \ts{ol} embedding into $\mc{B}(Y)$ that provides a canonical extension of $L$ when considered as a \ts{ol}. To show that it is a canonical extension of $L$ as a monadic \ts{ol}, we show that the quantifiers $\Ex_R$ and $\Ex_S$ on $\mc{B}(\mf{F}(L),\perp)$ are equal. But \[\Ex_R A = R[A]^{\perp \perp} = ({\uparrow}S[A])^{\perp \perp}= S[A]^{\perp \perp}=\Ex_SA.\] This completes the proof.  
\end{proof}

Our derivation could have been done throughout starting with the relation $S$, but it would have been a bit more complicated.

\section{Orthospaces}

Bimb\'o \cite{bimbo} placed Goldblatt's work \cite{goldblatt} on the Stone space of a \ts{ol} in a categorical setting in an attempt to create a duality between the category {\sf OL} of \ts{ol}s and their homomorphisms and what she called the category of orthospaces. 

The focus of this section is to show that what is obtained in \cite{bimbo} is a dual adjunction that gives rise to a dual equivalence when the definition of orthospaces is appropriately extended.  
\begin{defn}
An orthospace (abbrev.: \ts{os}) $(X,\perp,\leq,\tau)$ consists of an \ts{of} with a partial ordering $\leq$ and a compact topology $\tau$ that satisfies 
\begin{enumerate}
\item if $x\nleq y$ then there is $U\in\mc{C}(X)$ with $x\in U$ and $y\not\in U$,
\item if $x\perp z$ and $x\leq y$, then $y\perp z$, 
\item if $U\in\mc{C}(X)$, then $U^\perp\in\mc{C}(X)$, 
\item if $x\perp y$, then there is $U\in\mc{C}(X)$ with $x\in U$ and $y\in U^\perp$. 
\end{enumerate}
Here $\mc{C}(X)$ is the set of clopen, bi-orthogonally closed subsets of $X$.
\end{defn}
Condition (1) guarantees that every \ts{os} $X$ is totally-order disconnected and therefore totally disconnected. Since $X$ is compact by definition, $X$ is a Stone space.     
\begin{lem}\label{fig}
In any \ts{os} we have $x\leq y$ iff $y\in\{x\}^{\perp\perp}$.  
\end{lem}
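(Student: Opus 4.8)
The plan is to prove the biconditional $x \leq y \iff y \in \{x\}^{\perp\perp}$ in an orthospace by establishing each direction separately, using the orthospace axioms (1)--(4) together with the separation property they encode.

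For the forward direction, suppose $x \leq y$. I would first recall that $\{x\}^{\perp\perp}$ consists of all $z$ such that every $w \perp z$ also satisfies $w \perp x$; equivalently, $z \in \{x\}^{\perp\perp}$ iff $\{x\}^\perp \subseteq \{z\}^\perp$, i.e. $\{z\}^\perp \supseteq \{x\}^\perp$. So I want to show $\{y\}^\perp \supseteq \{x\}^\perp$. Take any $w$ with $w \perp x$; by symmetry $x \perp w$, and since $x \leq y$, axiom (2) gives $y \perp w$, hence $w \perp y$. Thus $\{x\}^\perp \subseteq \{y\}^\perp$, which is exactly the condition $y \in \{x\}^{\perp\perp}$. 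This direction is essentially immediate from the compatibility axiom (2) and does not require the topology.

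For the converse, suppose $x \nleq y$; I would show $y \notin \{x\}^{\perp\perp}$, i.e. produce a witness $w$ with $w \perp x$ but $w \not\perp y$ (equivalently, find an element of $\{x\}^\perp \setminus \{y\}^\perp$). By axiom (1), since $x \nleq y$ there is a clopen bi-orthogonally closed set $U \in \mc{C}(X)$ with $x \in U$ and $y \notin U$. The natural move is to pass to $U^\perp$: by axiom (3), $U^\perp \in \mc{C}(X)$, and since $U$ is bi-orthogonally closed we have $U = U^{\perp\perp}$. Because $y \notin U = U^{\perp\perp} = (U^\perp)^\perp$, there must exist some $w \in U^\perp$ with $w \not\perp y$ (otherwise $y$ would be orthogonal to all of $U^\perp$, placing $y$ in $(U^\perp)^\perp = U$). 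On the other hand, since $x \in U$ and $w \in U^\perp$, we have $w \perp x$. This $w$ witnesses $\{x\}^\perp \not\subseteq \{y\}^\perp$, so $y \notin \{x\}^{\perp\perp}$, completing the contrapositive.

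The main obstacle I anticipate is the converse direction, specifically the step that extracts the separating element $w$ from the clopen separator $U$. The subtlety is that axiom (1) only separates order via a bi-orthogonally closed clopen set, whereas the conclusion is phrased through the orthogonality relation; the bridge is the identity $U = U^{\perp\perp} = (U^\perp)^\perp$ for bi-orthogonally closed $U$, which lets me translate ``$y \notin U$'' into the existence of a point of $U^\perp$ not orthogonal to $y$. I would need to state carefully that $y \notin (U^\perp)^\perp$ means precisely that $y$ fails to be orthogonal to every element of $U^\perp$, hence some such element is not orthogonal to $y$. Once this unwinding of the double-orthogonal is made explicit, the rest is routine manipulation of $\perp$ using its symmetry and axiom (2). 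It is worth noting that axiom (4) is not strictly needed for this lemma, though it guarantees the ambient consistency of the orthogonality structure.
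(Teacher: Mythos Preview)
Your proof is correct and follows essentially the same approach as the paper. The only cosmetic difference is in the converse: the paper observes directly that $x\in U$ with $U$ bi-orthogonally closed gives $\{x\}^{\perp\perp}\subseteq U$, hence $y\notin\{x\}^{\perp\perp}$, whereas you unwind this inclusion to extract an explicit witness $w\in U^\perp$ with $w\perp x$ and $w\not\perp y$; these are the same argument at different levels of abstraction, and your remark that axiom~(4) is not needed here is accurate.
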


\begin{proof}
If $x\leq y$ then $y\in\{x\}^{\perp\perp}$ since (2) implies that each bi-orthogonally closed set is an upset and we always have $x\in\{x\}^{\perp\perp}$. Conversely, if $x\not\leq y$, then by (1) there is $U\in\mc{C}(X)$ with $x\in U$ and $y\not\in U$. Since $U$ is bi-orthogonally closed and $x\in U$, we have $\{x\}^{\perp\perp}\subseteq U$, hence $y\not\in\{x\}^{\perp\perp}$.  
\end{proof}

Using this lemma, one can formulate an equivalent definition of an \ts{os} that does not involve an ordering. Call $(X,\perp,\tau)$ an \ts{os}$^\prime$ if $(X,\perp)$ is an \ts{of} with a compact  topology~$\tau$ that satisfies (3) and (4) and additionally has $\mc{C}(X)$ separate points, that is, it satisfies the following: (1$^\prime$) if $x\neq y$ there is $U\in\mc{C}(X)$ with  $U\cap\{x,y\}$ containing one element. Clearly if $(X,\perp,\leq,\tau)$ is an \ts{os}, then $(X,\perp,\tau)$ is an \ts{os}$^\prime$ since (1) implies (1$^\prime$).

\begin{prop}
If $(X,\perp,\tau)$ is an \ts{os}$^\prime$, then setting $x\leq y$ iff $y\in\{x\}^{\perp\perp}$, we have that $(X,\perp,\leq,\tau)$ is an \ts{os}.    
\end{prop}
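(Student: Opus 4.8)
The plan is to exploit the fact that the family $\mc{C}(X)$ of clopen, bi-orthogonally closed sets is defined purely from $\perp$ and $\tau$ and makes no reference to any order. Consequently the topology $\tau$ is compact and conditions (3) and (4) hold for $(X,\perp,\leq,\tau)$ simply because they already hold for the \ts{os}$^\prime$ $(X,\perp,\tau)$. All that remains is to check that the relation defined by $x\leq y$ iff $y\in\{x\}^{\perp\perp}$ is a partial order and that it satisfies conditions (1) and (2). Throughout I would use the standard properties of the polarity $S\mapsto S^{\perp\perp}$: that $S\subseteq S^{\perp\perp}$, that the operator is monotone, and that $S^{\perp\perp}$ is the smallest bi-orthogonally closed set containing $S$, so that $z\in U$ with $U$ bi-orthogonally closed forces $\{z\}^{\perp\perp}\subseteq U$.

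First I would verify that $\leq$ is a partial order. Reflexivity is immediate from $x\in\{x\}^{\perp\perp}$. For transitivity, if $x\leq y$ and $y\leq z$, then $y\in\{x\}^{\perp\perp}$, and since $\{x\}^{\perp\perp}$ is bi-orthogonally closed this yields $\{y\}^{\perp\perp}\subseteq\{x\}^{\perp\perp}$; as $z\in\{y\}^{\perp\perp}$ we get $z\in\{x\}^{\perp\perp}$, i.e.\ $x\leq z$. For antisymmetry, $x\leq y$ and $y\leq x$ give $\{x\}^{\perp\perp}=\{y\}^{\perp\perp}$. If $x\neq y$, then (1$^\prime$) supplies $U\in\mc{C}(X)$ containing exactly one of $x,y$; but whichever point lies in $U$ forces its bi-orthogonal, hence the common set $\{x\}^{\perp\perp}=\{y\}^{\perp\perp}$, to lie inside $U$, so both points lie in $U$, a contradiction. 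Hence $x=y$.

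Next I would check the two compatibility conditions. Condition (2) is a direct unwinding: if $x\perp z$ and $x\leq y$, then $z\in\{x\}^\perp$ and $y\in\{x\}^{\perp\perp}$, and the latter means exactly that $y$ is orthogonal to every element of $\{x\}^\perp$, so $y\perp z$. For condition (1), suppose $x\nleq y$, i.e.\ $y\notin\{x\}^{\perp\perp}$. Unfolding the bi-orthogonal, this says there is some $z$ with $x\perp z$ but $\neg(y\perp z)$. Applying axiom (4) to the orthogonal pair $x\perp z$ produces $U\in\mc{C}(X)$ with $x\in U$ and $z\in U^\perp$. Then $y\notin U$, for otherwise $z\in U^\perp$ would give $z\perp y$, contradicting the choice of $z$. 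This $U$ witnesses condition (1).

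The only step that is more than routine bookkeeping is condition (1): the content lies in translating the failure $y\notin\{x\}^{\perp\perp}$ into an explicit witness $z$ with $x\perp z$ and $\neg(y\perp z)$, and then feeding the orthogonal pair into axiom (4) so that the resulting clopen bi-orthogonally closed set separates $x$ from $y$. I expect this to be the main (though mild) obstacle; the partial-order axioms and condition (2) are formal consequences of the definition together with (1$^\prime$).
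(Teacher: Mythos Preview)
Your proposal is correct and follows essentially the same approach as the paper: both verify the partial-order axioms using (1$^\prime$) for antisymmetry, derive condition (2) directly from $y\in\{x\}^{\perp\perp}$ and $z\in\{x\}^\perp$, and handle condition (1) by extracting a witness $z$ with $x\perp z$, $y\not\perp z$ from $y\notin\{x\}^{\perp\perp}$ and then applying axiom (4) to obtain the separating $U$. Your write-up is, if anything, slightly more explicit about why compactness and conditions (3), (4) are automatic.
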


\begin{proof}
It is simple to see that $\leq$ is reflexive and transitive. If $x\neq y$, then by (1$^\prime$)
there is $U$ that separates them, say $x\in U$ and $y\not\in U$. It follows that $\{x\}^{\perp\perp}\subseteq U$, hence $y\not\in \{x\}^{\perp\perp}$. So $\leq$ is anti-symmetric, hence a partial order. If $x\nleq y$, then $y\not\in\{x\}^{\perp\perp}$. So there is $z$ with $x\perp z$ and $y\not\perp z$. By (4) there is $U\in\mc{C}(X)$ with $x\in U$ and $z\in U^\perp$. We cannot have $y\in U$ since that would give $y\perp z$, hence $y\not\in U$. This shows that (1) holds for our derived relation $\leq$. Suppose $x\perp z$ and $x\leq y$. Then $y\in\{x\}^{\perp\perp}$. But $z\in\{x\}^\perp$, so $y\perp z$, giving (2).  
\end{proof}

If we begin with an \ts{os}$^\prime$ $(X,\perp,\tau)$, then form an \ts{os}
$(X,\perp,\leq,\tau)$ as above, then build from it an \ts{os}$^\prime$, we obviously return to the original since we have merely created and then discarded an auxiliary relation $\leq$. Suppose we start with an \ts{os} $(X,\perp,\leq,\tau)$, and then use the \ts{os}$^\prime$ to form a partial ordering. By Lemma~\ref{fig}, we return to our original \ts{os}. Thus, the notions of \ts{os} and \ts{os}$^\prime$ are equivalent. We follow Bimb\'o's terminology to make it easy to match with her paper, although the notion of an \ts{os}$^\prime$ seems simpler.

\begin{defn}\label{of-morphism}
Let $(P,\perp,\leq,\tau)$ and $(X,\perp,\leq,\tau)$ be \ts{os}'s. A function $\phi:P\to X$ is an \ts{os} \emph{morphism} if $\phi$ is continuous and satisfies 
\begin{enumerate}
\item if $\phi(p) \perp \phi(q)$ then $p\perp q$,
\item  if $x\not\perp \phi(p)$ then there exists $q$ with $q\not\perp p$ and $\phi(q)\in \{x\}^{\perp\perp}$.  
\end{enumerate}
Let $\sf{OS}$ be the category of \ts{os}'s and their morphisms. 
\end{defn}

For an \ts{ol} $L$, Goldblatt \cite{goldblatt} considered the topology $\tau$ on $\mc{F}(L)$ having as a sub-basis all sets $h(a)$, and their set-theoretic complements, for $a\in L$. He showed that this is a Stone topology, that the clopen bi-orthogonally closed sets of $\mc{F}(L)$ form an \ts{ol}, and that $h$ is an isomorphism from $L$ to the \ts{ol} of clopen bi-orthogonally closed sets of $\mc{F}(L)$. Bimb\'o showed \cite[Lemma~3.4]{bimbo} that $(\mathcal{F}(L),\subseteq,\perp,\tau)$ is an \ts{os}, and that for any \ts{os} $X$, its clopen bi-orthogonally closed sets $\mc{C}(X)$ form an \ts{ol} \cite[Lemma~3.3]{bimbo}. So for any \ts{ol} $L$ we have an \ts{os} $\mc{F}(L)$, and for any \ts{os} $X$ we have an \ts{ol} $\mc{C}(X)$. 

\begin{prop}
These assignments on objects extend to contravariant functors $\mc{F}:\sf{OL}\to\sf{OS}$ and $\mc{C}:\sf{OS}\to\sf{OL}$. For $f:L\to M$ an \ts{ol}-homomorphism, $\mc{F}(f):\mc{F}(M)\to\mc{F}(L)$ is given by $\mc{F}(f)=f^{-1}[\,\cdot\,]$ and for $\phi:P\to X$ an \ts{os}-morphism, $\mc{C}(\phi):\mc{C}(X)\to\mc{C}(P)$ is given by $\mc{C}(\phi)=\phi^{-1}[\,\cdot\,]$. 
\end{prop}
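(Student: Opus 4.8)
The plan is to verify that the two object assignments $\mc{F}$ and $\mc{C}$ respect morphisms, preserve identities, and preserve composition in a way compatible with their contravariance. The bulk of the work is showing that the stated formulas actually land in the correct categories: that $\mc{F}(f)=f^{-1}[\,\cdot\,]$ is a genuine \ts{os}-morphism $\mc{F}(M)\to\mc{F}(L)$, and that $\mc{C}(\phi)=\phi^{-1}[\,\cdot\,]$ is a genuine \ts{ol}-homomorphism $\mc{C}(X)\to\mc{C}(P)$. Once well-definedness is in hand, functoriality is formal: for set-theoretic preimages we always have $(\mathrm{id})^{-1}[\,\cdot\,]=\mathrm{id}$ and $(g\circ f)^{-1}[\,\cdot\,]=f^{-1}[g^{-1}[\,\cdot\,]]$, which gives $\mc{F}(g\circ f)=\mc{F}(f)\circ\mc{F}(g)$ and similarly for $\mc{C}$, matching contravariance.

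First I would treat $\mc{F}$. Given an \ts{ol}-homomorphism $f\colon L\to M$ and a proper filter $x$ of $M$, the preimage $f^{-1}[x]$ is a filter of $L$; I would check it is proper using that $f(0)=0$, so $0\notin f^{-1}[x]$. Thus $\mc{F}(f)=f^{-1}[\,\cdot\,]$ maps $\mc{F}(M)$ into $\mc{F}(L)$ on the underlying sets. I then must verify the three \ts{os}-morphism conditions of Definition~\ref{of-morphism}: continuity, the orthogonality-reflecting condition (1), and the back-condition (2). For continuity, it suffices to check preimages of sub-basic clopens; since the sub-basis of $\mc{F}(L)$ consists of the sets $h_L(a)$ and their complements, I would show $(\mc{F}(f))^{-1}[h_L(a)]=\{x\in\mc{F}(M):a\in f^{-1}[x]\}=\{x:f(a)\in x\}=h_M(f(a))$, which is clopen, giving continuity. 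For condition (1), I would use that $f$ preserves orthocomplementation to show that if $f^{-1}[x]\perp f^{-1}[y]$ via some $a$ with $a\in f^{-1}[x]$ and $a'\in f^{-1}[y]$, then $f(a)\in x$ and $f(a)'=f(a')\in y$, so $x\perp y$; here I must be careful about which direction the morphism condition runs and phrase it correctly relative to the names $P,X$ in the definition. Condition (2) is the back-condition and will require combining the definition of $\perp$ on filters with the compactness/density properties already established for the Goldblatt \ts{of} and the isomorphism $h$.

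Next I would treat $\mc{C}$. Given an \ts{os}-morphism $\phi\colon P\to X$ and a clopen bi-orthogonally closed set $U\in\mc{C}(X)$, I must show $\phi^{-1}[U]\in\mc{C}(P)$, i.e.\ that it is clopen and bi-orthogonally closed. Clopenness is immediate from continuity of $\phi$. For bi-orthogonal closedness I would exploit morphism condition (1), which reflects orthogonality, to show $\phi^{-1}[U^\perp]\subseteq(\phi^{-1}[U])^\perp$ and, using condition (2), the reverse containments needed to conclude $\phi^{-1}[U]=(\phi^{-1}[U])^{\perp\perp}$; this is exactly where the back-condition earns its keep. I then check that $U\mapsto\phi^{-1}[U]$ is an \ts{ol}-homomorphism: it preserves the bounds $\emptyset$ and the whole space, preserves finite meets (intersections) trivially, preserves the orthocomplement because $\phi^{-1}[U^\perp]=(\phi^{-1}[U])^\perp$ follows from conditions (1) and (2) together, and hence by De~Morgan preserves finite joins as well.

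The main obstacle will be verifying that $\mc{C}(\phi)$ preserves the orthocomplement, equivalently that $\phi^{-1}[U^\perp]=(\phi^{-1}[U])^\perp$ as bi-orthogonally closed sets, since this is the single place where both morphism conditions must interact rather than being checked in isolation. The inclusion $\phi^{-1}[U^\perp]\subseteq(\phi^{-1}[U])^\perp$ should follow directly from the orthogonality-reflecting condition (1), but the reverse inclusion will need condition (2): from $p\in(\phi^{-1}[U])^\perp$ one knows $p\perp q$ for every $q$ with $\phi(q)\in U$, and I must parlay this, via the back-condition and the fact that $U$ is bi-orthogonally closed, into $\phi(p)\in U^\perp$. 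I expect the same estimate to simultaneously discharge the well-definedness step (bi-orthogonal closedness of $\phi^{-1}[U]$), so I would prove it once as a lemma and cite it for both purposes. Everything else—continuity via sub-basic clopens, properness of preimage filters, and the formal identities for identities and composition—is routine.
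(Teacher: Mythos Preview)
Your proposal is correct and considerably more detailed than what the paper actually does: the paper's entire proof is a citation, namely ``See \cite[Lemmas~3.9 and~3.10]{bimbo}.'' So rather than comparing two arguments, the comparison is between a deferred proof and a direct one. Your outline hits the right checkpoints: properness of $f^{-1}[x]$ via $f(0)=0$, continuity via the sub-basic computation $(\mc{F}(f))^{-1}[h_L(a)]=h_M(f(a))$, condition~(1) from preservation of $'$, and you correctly flag condition~(2) and the identity $\phi^{-1}[U^\perp]=(\phi^{-1}[U])^\perp$ as the only places requiring real thought. For the latter, your sketch is exactly right: the inclusion $\phi^{-1}[U^\perp]\subseteq(\phi^{-1}[U])^\perp$ comes from condition~(1), and for the reverse, if $\phi(p)\not\in U^\perp$ then some $u\in U$ has $u\not\perp\phi(p)$, so condition~(2) yields $q\not\perp p$ with $\phi(q)\in\{u\}^{\perp\perp}\subseteq U$, contradicting $p\in(\phi^{-1}[U])^\perp$. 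The advantage of your route is self-containment; the paper's route is brevity at the cost of relying on an external source.
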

\begin{proof}
See \cite[Lemmas~3.9 and~3.10]{bimbo} for the proofs. 
\end{proof}

It was shown in \cite{bimbo} that there are a pair of natural transformations $h:1_{\sf{OL}}\to\mc{CF}$ and $g:1_{\sf{OS}}\to\mc{FC}$ where, for an \ts{ol} $L$ and \ts{os} $X$, the components of the natural transformations are given by:
\begin{align*}
h_L:L\to\mc{CF}(L) & \mbox{ is given by } h_L(a) =\{x:a\in x\}, \\   
g_X:X\to\mc{FC}(X) & \mbox{ is given by }
g_X(x)=\{U:x\in U\}.
\end{align*}
The naturality of $g$ and $h$ is given in 
\cite[Thm.~3.11]{bimbo}. The proof that $h_L$ is an \ts{ol}-isomorphism is given in \cite{goldblatt}.  
Thus $h:1_{\sf{OL}}\to\mc{CF}$ is a natural isomorphism. In \cite[Thm.~3.6]{bimbo}, it is claimed that $g_X$ is an isomorphism, hence that $\mc{F}$ and $\mc{C}$ provide a dual equivalence between {\sf OL} and {\sf OS}.
The proof that $g_X$ is one-one is correct, but it need not be onto. The issue with $g_X$ being onto was first pointed out in \cite{dmitrieva}. Below, we provide an example to show that $g_X$ need not be onto. 

\begin{ex}
For $X=\{x,y\}$, let $\perp$ be the relation $\not=$ of inequality, $\leq$ be the relation $=$ of equality, and $\tau$ be the discrete topology. Then $\tau$ is a Stone topology on $X$, $\perp$ is irreflexive and symmetric, hence an orthogonality relation on $X$, and $\leq$ is a partial ordering. Moreover, every subset of $X$ is both clopen and bi-orthogonally closed, so $\mc{C}(X)$ is the powerset of $X$ and $U^\perp$ is the set-theoretic complement of $U$ for each $U\subseteq X$. It is a simple matter to verify that $(X,\perp,\leq,\tau)$ is an \ts{os}. Then $\mc{C}(X)$ is a 4-element Boolean algebra, since it is the powerset of the 2-element set $X$. But a 4-element Boolean algebra has 3 proper filters. So $\mc{FC}(X)$ is a 3-element \ts{os}, thus cannot be isomorphic to $X$.
\end{ex}

\begin{defn}
An \ts{os} $X$ is \emph{ortho-sober} if each proper filter in the ortholattice $\mathcal{C}(X)$ is equal to $\{U\in\mc{C}(X):x\in U\}$ for some $x\in X$. 
\end{defn}

Ortho-sober orthospaces were introduced by Dmitrieva \cite{dmitrieva}, and later considered by McDonald and Yamamoto \cite{mcdonald}. The point is that for the full sub-category {\sf OSOS} of {\sf OS}
consisting of ortho-sober \mbox{orthospaces}, $\mc{F}$ maps {\sf OL} into {\sf OSOS}, and then $\mc{F}$ and $\mc{C}$ provide a dual equivalence between {\sf OL} and {\sf OSOS}. We use the remainder of this section to formulate what exists in the approach of \cite{bimbo} without the introduction of the ortho-sober condition. 

\begin{thm}
The functors $\mc{F}\dashv\mc{C}$ provide an adjunction between {\sf OL} and ${\sf OS}^{op}$. 
\end{thm}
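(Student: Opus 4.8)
The plan is to exhibit the adjunction $\mc{F}\dashv\mc{C}$ through its unit and counit, both of which are already available from the results of Bimb\'o quoted above. Reading $\mc{F}$ as a covariant functor $\mathsf{OL}\to\mathsf{OS}^{op}$ and $\mc{C}$ as a covariant functor $\mathsf{OS}^{op}\to\mathsf{OL}$, I would take the unit of the putative adjunction to be the natural transformation $h\colon 1_{\mathsf{OL}}\to\mc{CF}$, and the counit to be the natural transformation $g\colon 1_{\mathsf{OS}}\to\mc{FC}$, reinterpreted in the opposite category as $\mc{FC}\to 1_{\mathsf{OS}^{op}}$. Both are natural transformations by \cite[Thm.~3.11]{bimbo}, and $h$ is moreover a natural isomorphism. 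Since an adjunction is equivalently a pair of functors equipped with unit and counit natural transformations subject to the triangle identities, once naturality of $h$ and $g$ is granted it remains only to verify those two identities.

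Transporting the triangle identities for $\mc{F}\dashv\mc{C}$ back from $\mathsf{OS}^{op}$ to $\mathsf{OS}$ (which reverses the order of composition), they read
\begin{align*}
\mc{F}(h_L)\circ g_{\mc{F}L} &= 1_{\mc{F}L} \quad (L\in\mathsf{OL}),\\
\mc{C}(g_X)\circ h_{\mc{C}X} &= 1_{\mc{C}X} \quad (X\in\mathsf{OS}),
\end{align*}
the first holding in $\mathsf{OS}$ and the second in $\mathsf{OL}$. Here I would use that on morphisms the functors act by inverse image, $\mc{F}(f)=f^{-1}[\,\cdot\,]$ and $\mc{C}(\phi)=\phi^{-1}[\,\cdot\,]$, together with the component descriptions $h_L(a)=\{x:a\in x\}$ and $g_X(x)=\{U:x\in U\}$.

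For the second identity, fix a clopen bi-orthogonally closed $U\in\mc{C}X$. Then $h_{\mc{C}X}(U)$ is the set of proper filters $F$ of the ortholattice $\mc{C}X$ with $U\in F$, and applying $\mc{C}(g_X)=g_X^{-1}[\,\cdot\,]$ returns $\{x\in X: g_X(x)\in h_{\mc{C}X}(U)\}=\{x: U\in g_X(x)\}$; the defining equivalence $U\in g_X(x)\iff x\in U$ collapses this to $U$. The first identity is dual: for a proper filter $x$ of $L$ one computes $\mc{F}(h_L)(g_{\mc{F}L}(x))=h_L^{-1}[\{U\in\mc{CF}L: x\in U\}]=\{a\in L: x\in h_L(a)\}$, and the equivalence $x\in h_L(a)\iff a\in x$ collapses this to $x$. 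Combining these two identities with the naturality of $h$ and $g$ then yields the natural bijection $\mathsf{OS}(X,\mc{F}L)\cong\mathsf{OL}(L,\mc{C}X)$, which is the adjunction asserted.

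I do not expect any genuinely hard analytic step: the substance has already been carried out in constructing the functors and in Bimb\'o's naturality theorem, and the membership computations above are essentially tautological. The part that most needs care is the bookkeeping of variance. Since both $\mc{F}$ and $\mc{C}$ are contravariant, the adjunction lives between $\mathsf{OL}$ and $\mathsf{OS}^{op}$, and one must track the reversal of composition order when passing the triangle identities between $\mathsf{OS}$ and $\mathsf{OS}^{op}$ and invoke the naturality squares for $h$ and $g$ at exactly the right places. Finally, it is worth recording that, unlike $h$, the counit $g$ need not be an isomorphism, as the preceding example shows; this is what makes the statement a genuine dual adjunction rather than a dual equivalence, the latter being recovered only after restricting to ortho-sober orthospaces.
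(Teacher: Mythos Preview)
Your proof is correct and follows a genuinely different path from the paper's. The paper establishes the adjunction via the hom-set formulation: it defines explicit maps $f^-=\mc{F}(f)\circ g_X$ and $\phi^+=\mc{C}(\phi)\circ h_L$ between $\textsc{Hom}_{\sf OL}(L,\mc{C}(X))$ and $\textsc{Hom}_{\sf OS}(X,\mc{F}(L))$, checks directly that $f^{-+}=f$ and $\phi^{+-}=\phi$, and then verifies naturality in each coordinate by hand. You instead use the unit--counit formulation, taking $h$ and $g$ as unit and counit and verifying the two triangle identities; naturality is imported wholesale from Bimb\'o's theorem rather than rechecked. Your approach is arguably cleaner, since the triangle computations are the same tautological membership manipulations the paper carries out anyway, and you avoid redoing the naturality argument. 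On the other hand, the paper's explicit hom-set bijection pays off in Section~5, where the monadic adjunction is obtained by observing that $(\,\cdot\,)^-$ and $(\,\cdot\,)^+$ already restrict to the monadic hom-sets once $g_X$ and $h_L$ are known to be monadic morphisms; with your formulation one would have to argue that the triangle identities and naturality persist in the monadic categories, which is equally routine but less immediate from what is written.
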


\begin{proof}
For $L$ and \ts{ol} and $X$ an \ts{os} we define mappings $(\,\cdot\,)^-$ and $(\,\cdot\,)^+$
\begin{center}
\begin{tikzpicture}
\node at (0,0) {$\ts{Hom}_{\sf OL}(L,\mc{C}(X))$};  
\node at (5,0) {$\ts{Hom}_{\sf OS}(X,\mc{F}(L))$}; 
\draw[->] (1.6,0.1)--(3.4,0.1);
\draw[->] (3.4,-0.1)--(1.6,-0.1);
\node at (2.6,.5) {$(\,\cdot\,)^-$};
\node at (2.6,-.5) {$(\,\cdot\,)^+$};
\end{tikzpicture}    
\end{center}
by setting for $f:L\to\mc{C}(X)$ and $\phi:X\to\mc{F}(L)$
\begin{align*}
f^-(x) &= \{a:x\in f(a)\},\\
\phi^+(a) &= \{x:a\in\phi(x)\}.
\end{align*}
Note that $f^-:X\to\mc{F}(L)$ is the composite of $g_X:X\to\mc{FC}(X)$ and $\mc{F}(f):\mc{FC}(X)\to\mc{F}(L)$. Indeed, we have \[\mc{F}(f)\circ g_X (x) = \mc{F}(f)(\{U:x\in U\}) = \{a:x\in f(a)\}.\] 
Note also that $\phi^+:L\to\mc{C}(X)$ is the composite of $h_L:L\to\mc{CF}(L)$ and $\mc{C}(\phi):\mc{CF}(L)\to\mc{C}(X)$. Indeed, we have \[\mc{C}(\phi) \circ h_L(a) = \mc{C}(\phi) (\{y:a\in y\}) = \{x:a\in\phi(x)\}.\] Thus, the maps $(\,\cdot\,)^-$ and $(\,\cdot\,)^+$ between hom-sets are well-defined. Also, we have 
\begin{align*}
f^{-+}(a)&=\{x:a\in f^-(x)\}=\{x:x\in f(a)\}=f(a), \\
\phi^{+-}(x) &= \{a:x\in \phi^+(a)\} = \{a:a\in\phi(x)\} =\phi(x).
\end{align*}
Thus, for each $L,X$ the maps $(\,\cdot\,)^-$ and $(\,\cdot\,)^+$ are mutually inverse bijections between hom-sets. We require naturality. For naturality in one coordinate, we must show that if $\psi:X'\to X$ and $\alpha:X\to\mc{F}(L)$, then $(\alpha\circ\psi)^+=\mc{C}(\psi)\circ\alpha^+$. For naturality in the other coordinate, we must show that if $f:L'\to L$ and $\alpha:X\to\mc{F}(L)$, then $(\mc{F}(f)\circ\alpha)^+=\alpha^+\circ f$. For the former, we have 
\begin{align*}
    (\mc{C}(\psi)\circ\alpha^+)(a)&=\mc{C}(\psi)(\{x: a\in\alpha(x)\})\\&= \{x':\psi(x')\in\{x:a\in\alpha(x)\}\}
    \\&=\{x':a\in\alpha\psi( x')\} \\&= (\alpha\circ\psi)^+(a),
\end{align*}
and for the latter, we have
\begin{align*}
    (\alpha^+\circ f)(b) &= 
\{x:f(b)\in\alpha(x)\} \\&=
\{x:b\in (\mc{F}(f)\circ\alpha)(x)\}
\\&= (\mc{F}(f)\circ \alpha)^+(b).
\end{align*}
This completes the proof. 
\end{proof}

\section{Monadic orthospaces}

In this section, we extend the results in the previous section to the setting of monadic \ts{ol}s. 

\begin{defn}\label{monadic orthospace}
A tuple $(X;\perp,\leq,R,\tau)$ is a monadic orthospace if $(X,\perp,\leq,\tau)$ is an \ts{os}, $(X,\perp,R)$ is a monadic \ts{of}, and for each $U\in\mc{C}(X)$ we have $R[U]\in\mc{C}(X)$ .
\end{defn} 

For a monadic \ts{of} $X$, its bi-orthogonally closed sets $\mc{B}(X)$ form a monadic \ts{ol} under the quantifier $\Ex A = R[A]^{\perp\perp}$. It is clear from the definition of a monadic \ts{os} that its clopen bi-orthogonally closed sets $\mc{C}(X)$ form a subalgebra of $\mc{B}(X)$, hence form a monadic \ts{ol}. 

\begin{defn} 
Let $L$ be a monadic \ts{ol}, and equip its Goldblatt \ts{os} $(\mc{F}(L),\perp,\leq,\tau)$ with the relation $xRy$ iff $\Ex[x]\subseteq y$ of its Goldblatt \ts{of}. Call this the monadic Goldblatt \ts{os} and denote it $\mc{F}L$.
\end{defn}

For a monadic \ts{ol} $L$, we have that $\mc{F}(L)$ is indeed a monadic \ts{os}. It is clearly an \ts{os} and also a monadic \ts{of}. It remains only to show that if $U$ is a clopen and bi-orthogonally closed set of $\mc{F}(L)$, then so is $R[U]$. By Goldblatt's result, $U=h(a)$ for some $a\in L$. In the proof of Proposition~\ref{stuff}, we saw that $R[h(a)]=h(\Ex a)$, so $R[U]$ is clopen and bi-orthogonally closed, hence $\mc{F}(L)$ is a monadic \ts{os}. 

\begin{prop}
For $L$ a monadic \ts{ol}, the map $ h_L:L\to\mc{CF}(L)$ given by $h_L(a)=\{x : a\in x\}$ is a monadic \ts{ol} isomorphism. 
\end{prop}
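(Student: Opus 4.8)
The plan is to build on two facts already in place. By Goldblatt's theorem, recalled just before this proposition, $h_L\colon L\to\mc{CF}(L)$ is an ortholattice isomorphism; and since $\mc{F}(L)$ was shown just above to be a monadic \ts{os}, its clopen bi-orthogonally closed sets $\mc{CF}(L)$ form a monadic subalgebra of $\mc{B}(\mc{F}(L))$, carrying the quantifier $\Ex_R U=R[U]^{\perp\perp}$. Thus the only thing left to check is that $h_L$ preserves the quantifier, that is, that $h_L(\Ex a)=\Ex_R h_L(a)$ for every $a\in L$.

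First I would compute $R[h_L(a)]$ directly from the definition of $R$ on the Goldblatt \ts{of}. Unwinding the definitions, a filter $z$ lies in $R[h_L(a)]$ precisely when there is a filter $x$ with $a\in x$ and $\Ex[x]\subseteq z$. For the inclusion $R[h_L(a)]\subseteq h_L(\Ex a)$, note that from $a\in x$ we obtain $\Ex a\in\Ex[x]\subseteq z$, so $\Ex a\in z$. For the reverse inclusion, given $z$ with $\Ex a\in z$ I would exhibit a witnessing filter, namely the principal filter $x=\{b\in L:a\leq b\}$: then $a\in x$, and for each $b\geq a$ monotonicity of the quantifier gives $\Ex b\geq\Ex a$, whence $\Ex b\in z$ since $z$ is upward closed; hence $\Ex[x]\subseteq z$ and $z\in R[h_L(a)]$. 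This identifies $R[h_L(a)]$ with $h_L(\Ex a)=\{z:\Ex a\in z\}$.

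With this identity in hand the rest is immediate. Since $h_L(\Ex a)$ lies in the image of the ortholattice embedding it is bi-orthogonally closed, so applying the orthoclosure does nothing, and
\[ \Ex_R h_L(a)=R[h_L(a)]^{\perp\perp}=h_L(\Ex a)^{\perp\perp}=h_L(\Ex a). \]
Combined with the fact that $h_L$ is already an ortholattice isomorphism, this shows it is a monadic \ts{ol} isomorphism.

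There is no genuinely hard step here; the one place that requires care is the reverse inclusion $h_L(\Ex a)\subseteq R[h_L(a)]$, where one must produce a filter witnessing membership in the relational image, and the principal filter generated by $a$ does the job precisely because $\Ex$ is monotone and filters are upward closed (the trivial case $a=0$, where both sides are empty, being handled separately). Indeed this is essentially the same computation already carried out above in verifying that $\mc{F}(L)$ is a monadic \ts{os}.
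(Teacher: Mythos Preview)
Your proposal is correct and follows essentially the same route as the paper: reduce to showing $h_L(\Ex a)=R[h_L(a)]^{\perp\perp}$, establish $R[h_L(a)]=h_L(\Ex a)$, and note the latter is already bi-orthogonally closed. The only difference is cosmetic: the paper cites the identity $R[h(a)]=h(\Ex a)$ from the proof of Proposition~\ref{stuff2} (and the paragraph immediately preceding the present proposition), whereas you reprove it inline with the principal-filter witness; your explicit handling of the $a=0$ edge case is a nice touch the paper leaves implicit.
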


\begin{proof}
We know that $h_L$ is an \ts{ol} isomorphism. By the discussion above, for $a\in L$ we have $h_L(\Ex a)=R[h(a)]=R[h(a)]^{\perp\perp}=\Ex h_L(a)$ and thus $h_L$ is a homomorphism for $\exists$.    
\end{proof}

\begin{defn}
For monadic \ts{os}'s $X$ and $Y$, a map $\phi:X\to Y$ is a monadic \ts{os} morphism if it is an \ts{os} morphism and $R[\phi^{-1}[U]]=\phi^{-1}[R[U]]$ for each $U\in\mc{C}(Y)$.     
\end{defn}

\begin{prop}
For $X$ a monadic \ts{os}, the map $g_X:X\to \mc{FC}(X)$ given by $g_X(x)=\{U:x\in U\}$ is a monadic \ts{os} embedding.    
\end{prop}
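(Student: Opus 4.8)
The plan is to lean on the non-monadic results and reduce everything to a single verification. Since $g$ was exhibited as a natural transformation $1_{\sf OS}\to\mc{FC}$ whose components were already shown to be one-one, $g_X$ is automatically an injective \ts{os}-morphism. Hence the only new obligation is the monadic morphism condition, namely that
\[
R_X[\,g_X^{-1}[W]\,]=g_X^{-1}[\,R_{\mc{FC}(X)}[W]\,]
\]
for every $W\in\mc{C}(\mc{FC}(X))$, where $R_X$ is the relation of $X$ and $R_{\mc{FC}(X)}$ is the Goldblatt relation of the monadic \ts{ol} $\mc{C}(X)$.

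First I would pin down the clopen bi-orthogonally closed sets of $\mc{FC}(X)$. Applying Goldblatt's isomorphism to the \ts{ol} $\mc{C}(X)$, every such $W$ has the form $\widehat V=\{F\in\mc{F}(\mc{C}(X)):V\in F\}=h_{\mc{C}(X)}(V)$ for a unique $V\in\mc{C}(X)$. Unwinding definitions then gives $g_X^{-1}[\widehat V]=V$: indeed $x\in g_X^{-1}[\widehat V]$ iff $V\in g_X(x)$ iff $x\in V$. In other words $g_X^{-1}\circ h_{\mc{C}(X)}$ is the identity on $\mc{C}(X)$, which is just the relation underlying the adjunction of the previous section.

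It then remains to evaluate the two sides. Because $X$ is a monadic \ts{os}, $\mc{C}(X)$ is a subalgebra of the monadic \ts{ol} $\mc{B}(X)$, so the quantifier of $V\in\mc{C}(X)$ is $\Ex V=R_X[V]^{\perp\perp}=R_X[V]$, the last equality holding since $R_X[V]\in\mc{C}(X)$. Thus the left-hand side is $R_X[g_X^{-1}[\widehat V]]=R_X[V]=\Ex V$. For the right-hand side, the identity $R[h(a)]=h(\Ex a)$ established for a monadic Goldblatt \ts{of} gives $R_{\mc{FC}(X)}[\widehat V]=\widehat{\Ex V}$, and then $g_X^{-1}[\widehat{\Ex V}]=\Ex V$ by the computation of the previous paragraph applied to $\Ex V\in\mc{C}(X)$. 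Both sides equal $\Ex V$, so the condition holds and $g_X$ is a monadic \ts{os} embedding.

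The computation is short once the pieces are aligned, and the step carrying the real content is the bookkeeping across the two quantifier/relation layers: recognizing that the monadic \ts{os} hypothesis $R[U]\in\mc{C}(X)$ is exactly what forces $\Ex V=R_X[V]$ inside $\mc{C}(X)$ (so the left side lands on the nose), and that the already-proven $R[h(a)]=h(\Ex a)$ together with $g_X^{-1}\circ h_{\mc{C}(X)}=\mathrm{id}$ collapses the right side to the same value. I would be careful to keep $R_X$ and $R_{\mc{FC}(X)}$ notationally distinct throughout, since conflating them is the one place an otherwise routine argument could go wrong.
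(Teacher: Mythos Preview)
Your proposal is correct and follows essentially the same route as the paper: both identify $W=h_{\mc{C}(X)}(V)$ via Goldblatt's isomorphism, compute $g_X^{-1}[W]=V$, and then reduce both sides of the monadic morphism condition to $R_X[V]=\Ex V$. The only cosmetic difference is that you invoke the already-proved identity $R[h(a)]=h(\Ex a)$ for the right-hand side, whereas the paper unwinds the definition of the Goldblatt relation on $\mc{FC}(X)$ directly using the principal filter ${\uparrow}V$; the content is the same.
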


\begin{proof}
To aid readability, we write $g$ for $g_X$. It is known that $g$ is an \ts{os} embedding. The remaining condition for $g$ to be a monadic \ts{os} morphism involves several levels. To assist with this, we use the following conventions. Elements of $X$ are written $x,y$ and $R$ is the additional relation on $X$. Elements of $\mc{FC}(X)$ are filters of $\mc{C}(X)$ and are written as $F$. The relation of $\mc{FC}(X)$ is denoted $S$. We must show that for $\mc{V}\in\mc{CFC}(X)$
\[ R[g^{-1}[\mc{V}]]=g^{-1}[S[\mc{V}]].\]

\noindent Note that by Goldblatt's result, there is some $U_0\in\mc{C}(X)$ with \[\mc{V}=\{F\in\mc{FC}(X):U_0\in F\}.\]    

Observe that $x\in g^{-1}[\mc{V}]$ iff $g(x)\in\mc{V}$, which occurs iff $U_0\in g(x)$, hence, iff $x\in U_0$. Thus 
\[R[g^{-1}[\mc{V}]] \,\, = \,\, R[U_0].\] 
For $y\in X$
we have $y\in g^{-1}[S[\mc{V}]]$ iff $g(y)\in S[\mc{V}]$. This occurs iff there is some $F\in\mc{V}$ with $F\, S\,g(y)$, hence some filter $F$ with $U_0\in F$ and $\Ex_{\mc{C}(X)}[F]\subseteq g(y)$. But ${\uparrow}U_0$, the principle filter of $\mc{C}(X)$ generated by $U_0$, is the smallest filter containing $U_0$. So these conditions occur iff $\Ex_{\mc{C}(X)}U_0\in g(y)$, and this occurs iff $y\in\Ex_{\mc{C}(X)}U_0$. But we have seen that $\Ex_{\mc{C}(X)}U_0 = R[U_0]$. Thus 
\[ g^{-1}[S[\mc{V}]]\,\, = \,\, R[U_0].\]
This establishes the result. 
\end{proof}

%

\begin{lem}
The composite of monadic \ts{os} morphisms is a monadic \ts{os} morphism.     
\end{lem}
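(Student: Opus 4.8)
The plan is to verify directly that the two defining properties of a monadic \ts{os} morphism are preserved under composition. Let $\phi\colon X\to Y$ and $\psi\colon Y\to Z$ be monadic \ts{os} morphisms between monadic \ts{os}'s, and write $R$ for the additional relation on each of the three spaces (context will disambiguate). Since both $\phi$ and $\psi$ are in particular \ts{os} morphisms, and since the composite of \ts{os} morphisms is an \ts{os} morphism (this is the underlying categorical fact from the previous section, as {\sf OS} is a category), we know at once that $\psi\circ\phi$ is an \ts{os} morphism. It therefore remains only to check the monadic condition, namely that
\[
R\bigl[(\psi\circ\phi)^{-1}[W]\bigr]=(\psi\circ\phi)^{-1}\bigl[R[W]\bigr]
\]
for every $W\in\mc{C}(Z)$.

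The key step is a bookkeeping computation that chains the two instances of the monadic condition. First I would rewrite the preimage using $(\psi\circ\phi)^{-1}[W]=\phi^{-1}[\psi^{-1}[W]]$. The crucial point needed to apply $\phi$'s condition is that $\psi^{-1}[W]$ lies in $\mc{C}(Y)$: since $W\in\mc{C}(Z)$ and $\psi$ is an \ts{os} morphism, the functor $\mc{C}$ on morphisms is exactly preimage, so $\psi^{-1}[W]=\mc{C}(\psi)(W)\in\mc{C}(Y)$. With this in hand, the monadic condition for $\phi$ applied to the set $\psi^{-1}[W]\in\mc{C}(Y)$ gives
\[
R\bigl[\phi^{-1}[\psi^{-1}[W]]\bigr]=\phi^{-1}\bigl[R[\psi^{-1}[W]]\bigr].
\]
Next, the monadic condition for $\psi$ applied to $W\in\mc{C}(Z)$ gives $R[\psi^{-1}[W]]=\psi^{-1}[R[W]]$, so substituting yields $\phi^{-1}\bigl[\psi^{-1}[R[W]]\bigr]=(\psi\circ\phi)^{-1}[R[W]]$, which is exactly what we want.

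The main obstacle—really the only place where care is required—is the well-definedness subtlety just flagged: the monadic condition for a morphism is stated only for clopen bi-orthogonally closed sets of the target, so before applying $\phi$'s condition I must certify that $\psi^{-1}[W]$ is itself such a set. This is precisely why the definition of a monadic \ts{os} builds in the requirement $R[U]\in\mc{C}(X)$ for each $U\in\mc{C}(X)$ (ensuring $\mc{C}(X)$ is closed under $\Ex$) and why $\mc{C}(\psi)$ lands in $\mc{C}(Y)$; once these closure facts are invoked, the rest is the formal substitution above. I would also remark that continuity of the composite, needed for it to be an \ts{os} morphism at all, is immediate since $\phi$ and $\psi$ are continuous, so no separate verification of conditions (1) and (2) of Definition~\ref{of-morphism} is required beyond citing that {\sf OS} is a category.
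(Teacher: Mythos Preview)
Your proof is correct and takes essentially the same approach as the paper: both verify that $\psi^{-1}[W]\in\mc{C}(Y)$ (so that $\phi$'s monadic condition applies), then chain the two monadic conditions via $(\psi\circ\phi)^{-1}=\phi^{-1}\circ\psi^{-1}$. The only cosmetic difference is that the paper distinguishes the three relations as $R,S,T$ rather than overloading a single symbol.
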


\begin{proof}
Suppose $X,Y,Z$ are monadic \ts{os}'s with associated relations $R,S,T$, respectively. Let 
$\phi:X\to Y$ and $\psi:Y\to Z$ be monadic \ts{os} morphisms. Then for $W\in\mc{C}(Z)$ we have that $\psi^{-1}[W]\in\mc{C}(Y)$ since $\psi$ is in particular an \ts{os} morphism, and then $S[\psi^{-1}[W]]$ also belongs to $\mc{C}(Y)$ since $Y$ is a monadic \ts{os}. Then $\phi^{-1}\psi^{-1}[T[W]]= \phi^{-1}[S[\psi^{-1}[W]]] = R[\phi^{-1}\psi^{-1}[W]]$.      
\end{proof}

Let {\sf mOL} be the category of monadic \ts{ol}s (usually {\sf MOL} denotes modular \ts{ol}s) and {\sf mOS} be the category of monadic \ts{os}s. For $L$ a monadic \ts{ol}, its Goldblatt frame $\mc{F}(L)$ is a monadic \ts{os}, and for a monadic \ts{os} $X$ its clopen bi-orthogonally closed subsets $\mc{C}(X)$ form a monadic \ts{ol}. 

\begin{lem}
For $f:L\to M$ an {\sf mOL} morphism and $\phi:X\to Y$ an {\sf mOS} morphism, $f^{-1}:\mc{F}(M)\to\mc{F}(L)$ is an {\sf mOS} morphism, and $\phi^{-1}:\mc{C}(Y)\to\mc{C}(X)$ is an {\sf mOL} morphism.    
\end{lem}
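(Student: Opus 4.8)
The statement has two halves, and each amounts to checking that a map already known to preserve the ortholattice structure also interacts correctly with the quantifier relations. The plan is to treat the two halves separately but symmetrically. For the first half, I would start from the fact, established in the previous section, that $f^{-1}:\mc{F}(M)\to\mc{F}(L)$ is already an \ts{os} morphism. Thus it remains only to verify the single extra condition defining a monadic \ts{os} morphism, namely that $R_M[(f^{-1})^{-1}[U]] = (f^{-1})^{-1}[R_L[U]]$ for each clopen bi-orthogonally closed $U\in\mc{C}(\mc{F}(L))$. By Goldblatt's result, such a $U$ has the form $U=h_L(a)$ for some $a\in L$, which lets me reduce the relational computation to an algebraic one about the quantifiers of $L$ and $M$.

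The key computation will use the identity $R[h(a)]=h(\Ex a)$ proved in Proposition~\ref{stuff2} (and reused for the monadic Goldblatt \ts{os}), together with the fact that $f$ is an {\sf mOL} morphism, i.e.\ $f(\Ex a)=\Ex f(a)$. Writing $(f^{-1})^{-1}[h_L(a)]$ as a set of proper filters $y$ of $M$ with $f^{-1}[y]\ni a$, I would track how the condition ``$a\in f^{-1}[y]$'' transforms under applying $R$ on either side, and show both sides equal $h_M(\Ex f(a)) = h_M(f(\Ex a))$. The point is that pulling back along $f^{-1}$ commutes with the $R$-image precisely because $f$ commutes with $\Ex$; this is where the {\sf mOL} hypothesis is actually used, and it is the only place. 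The second half, that $\phi^{-1}:\mc{C}(Y)\to\mc{C}(X)$ is an {\sf mOL} morphism, is even more direct: $\phi^{-1}$ is already an \ts{ol} homomorphism, so I need only check it preserves $\Ex$, i.e.\ $\phi^{-1}[\Ex_Y U] = \Ex_X \phi^{-1}[U]$. Since $\Ex U = R[U]^{\perp\perp}$ and $U\in\mc{C}(Y)$ forces $R[U]\in\mc{C}(Y)$ (so $R[U]^{\perp\perp}=R[U]$), this reduces to $\phi^{-1}[R[U]] = R[\phi^{-1}[U]]$, which is exactly the defining condition of a monadic \ts{os} morphism applied to $\phi$.

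The main obstacle I anticipate is purely bookkeeping: the first half involves three nested levels of inverse images and filter-membership (an element $a\in L$, a filter $y$ of $M$, its preimage $f^{-1}[y]$ as a filter of $L$), so care is needed to keep the two orthogonality/monadic relations $R_L$, $R_M$ straight and to confirm that $f^{-1}[y]$ is genuinely a proper filter whenever $y$ is. I would handle this by translating everything into the membership characterization of $h$ at the outset and only then applying $R$, so that each $R$-image collapses to a single $h(\Ex(\,\cdot\,))$ via the Proposition~\ref{stuff2} identity; once reduced to that form, the commutation with $f$ is immediate and no genuine difficulty remains.
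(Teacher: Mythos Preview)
Your proposal is correct and follows essentially the same route as the paper: reduce the first half via Goldblatt's representation $U=h_L(a)$ and the identity $R[h(a)]=h(\Ex a)$ to the algebraic fact $f(\Ex a)=\Ex f(a)$, and reduce the second half to the defining condition of a monadic \ts{os} morphism using $\Ex U=R[U]$ for $U\in\mc{C}(Y)$. The only cosmetic difference is that the paper states the identity $(f^{-1})^{-1}[h_L(a)]=h_M(f(a))$ directly rather than unpacking it through filter membership, which dissolves the bookkeeping concern you flagged.
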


\begin{proof}
We first show that $\psi=f^{-1}$ is an {\sf mOS} morphism. Since we already know it is an \ts{os} morphism, it remains to show that for $U\in\mc{CF}(L)$, we have $R[\psi^{-1}[U]]=\psi^{-1}[R[U]]$. By Goldblatt's result, $U=h_L(a)$ for some $a\in L$. Earlier we showed that $R[h_L(a)]=h_L(\Ex a)$ and $\psi^{-1}[h_L(a)]=h_M(f(a))$. Thus, we have
\begin{align*}
    R[\psi^{-1}[U]]&= R[h_M(f(a))]=h_M(\Ex f(a))=h_M(f(\Ex a))\\&= \psi^{-1}[h_L(\Ex a)] = \psi^{-1}[R[U]].
\end{align*}

Since $\phi:X\to Y$ is an \ts{os} morphism, $\phi^{-1}$ is an \ts{ol} homomorphism. We must show that for $U\in\mc{C}(Y)$, that $\phi^{-1}[\Ex U] = \Ex\phi^{-1}[U]$. But $U\in\mc{C}(Y)$ implies that $R[U]\in\mc{C}(Y)$, and so $\Ex U = R[U]^{\perp\perp}=R[U]$. Therefore, since $\phi$ is a {\sf mOS} morphism, we have \[\phi^{-1}[\Ex U]=\phi^{-1}[R[U]]=R[\phi^{-1}[U]] = \Ex\phi^{-1}[U],\] which completes the proof. 
\end{proof}

We then have that $\mc{F}$ and $\mc{C}$ are contravariant functors between {\sf mOL} and {\sf mOS}. Recall, that for an \ts{ol} $L$ and \ts{os} $X$, we earlier produced mutually inverse bijections between homsets where $f^-=\mc{F}(f)\circ g_X$ and $\phi^+=\mc{C}(\phi)\circ h_L$. 
\begin{center}
\begin{tikzpicture}
\node at (0,0) {$\ts{Hom}_{\sf OL}(L,\mc{C}(X))$};  
\node at (5,0) {$\ts{Hom}_{\sf OS}(X,\mc{F}(L))$}; 
\draw[->] (1.6,0.1)--(3.4,0.1);
\draw[->] (3.4,-0.1)--(1.6,-0.1);
\node at (2.6,.5) {$(\,\cdot\,)^-$};
\node at (2.6,-.5) {$(\,\cdot\,)^+$};
\end{tikzpicture}    
\end{center}
If $L$ is an {\sf mOL}, $X$ an {\sf mOS}, $f$ is an {\sf mOL} morphism, and $\phi$ an {\sf mOS} morphism, then since $g_X$ is an {\sf mOS} morphism, and $h_L$ is an {\sf mOL} morphism, it follows that $f^-$ is an {\sf mOS} morphism and $\phi^+$ is an {\sf mOL} morphism. Thus, we have mutually inverse bijecitons 
\begin{center}
\begin{tikzpicture}
\node at (0,0) {$\ts{Hom}_{\sf mOL}(L,\mc{C}(X))$};  
\node at (5,0) {$\ts{Hom}_{\sf mOS}(X,\mc{F}(L))$}; 
\draw[->] (1.6,0.1)--(3.4,0.1);
\draw[->] (3.4,-0.1)--(1.6,-0.1);
\node at (2.6,.5) {$(\,\cdot\,)^-$};
\node at (2.6,-.5) {$(\,\cdot\,)^+$};
\end{tikzpicture}    
\end{center}
\noindent The naturality of these in each coordinate is given by the naturality in the previous setting. This yields the following. 

\begin{thm}
$\mc{F}\dashv \mc{C}$ is an adjunction between {\sf mOL} and {\sf mOS}$^{op}$.    
\end{thm}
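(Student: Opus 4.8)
The plan is to assemble the adjunction from the pieces already established in the excerpt, treating the monadic case as an enrichment of the ortholattice-level adjunction rather than rebuilding it from scratch. The heavy lifting—that $(\,\cdot\,)^-$ and $(\,\cdot\,)^+$ are mutually inverse bijections between the relevant hom-sets, and that they are natural in both coordinates—was already carried out for \textsf{OL} and \textsf{OS}. So the entire task reduces to checking that these same two maps restrict correctly to the monadic hom-sets $\ts{Hom}_{\sf mOL}(L,\mc{C}(X))$ and $\ts{Hom}_{\sf mOS}(X,\mc{F}(L))$.

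First I would recall the factorizations $f^- = \mc{F}(f)\circ g_X$ and $\phi^+ = \mc{C}(\phi)\circ h_L$ proved in the previous section. The key observation is that each factor in these composites is a \emph{monadic} morphism: by the earlier proposition, $g_X\colon X\to\mc{FC}(X)$ is a monadic \ts{os} embedding, and $h_L\colon L\to\mc{CF}(L)$ is a monadic \ts{ol} isomorphism; moreover, by the preceding lemma, whenever $f$ is an \textsf{mOL} morphism then $\mc{F}(f)$ is an \textsf{mOS} morphism, and whenever $\phi$ is an \textsf{mOS} morphism then $\mc{C}(\phi)$ is an \textsf{mOL} morphism. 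Since the composite of monadic \ts{os} morphisms is again a monadic \ts{os} morphism (by the composition lemma), it follows that $f^-$ lands in $\ts{Hom}_{\sf mOS}(X,\mc{F}(L))$ and $\phi^+$ lands in $\ts{Hom}_{\sf mOL}(L,\mc{C}(X))$. Thus $(\,\cdot\,)^-$ and $(\,\cdot\,)^+$ restrict to well-defined maps between the monadic hom-sets.

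Next I would note that being mutually inverse is a property inherited directly from the ortholattice setting: the computations $f^{-+}=f$ and $\phi^{+-}=\phi$ established in the \textsf{OL}/\textsf{OS} adjunction are identities on the underlying set-maps and make no reference to the quantifier, so they remain valid verbatim on the monadic hom-sets, which are merely subsets of the old ones. The same applies to naturality in each coordinate: the naturality squares $(\alpha\circ\psi)^+ = \mc{C}(\psi)\circ\alpha^+$ and $(\mc{F}(f)\circ\alpha)^+ = \alpha^+\circ f$ were verified by elementwise computation and hold unchanged, so no new diagram-chasing is required—one only needs to observe that all arrows involved are now monadic morphisms.

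I do not expect a serious obstacle here, since the monadic adjunction is genuinely a restriction of the non-monadic one and all the substantive monadic content (that $g_X$, $h_L$, $\mc{F}(f)$, and $\mc{C}(\phi)$ respect the quantifier) has been isolated into the lemmas and propositions immediately preceding. The only point demanding care is bookkeeping: one must confirm that the restricted maps are genuinely \emph{bijections} between the monadic hom-sets, i.e.\ that $(\,\cdot\,)^-$ sends monadic morphisms to monadic morphisms \emph{and conversely} that every monadic $\phi$ arises as some $f^-$ with $f$ monadic. This follows because $(\,\cdot\,)^+$ is its inverse and also preserves monadicity, so the two restrictions are inverse bijections between the monadic hom-sets rather than merely injections; once this is noted the proof is complete.
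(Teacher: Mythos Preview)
Your proposal is correct and follows essentially the same route as the paper: the paper's argument (given in the paragraph immediately preceding the theorem) also uses the factorizations $f^-=\mc{F}(f)\circ g_X$ and $\phi^+=\mc{C}(\phi)\circ h_L$, observes that each factor is a monadic morphism by the preceding lemmas and propositions, and then inherits the mutual-inverse and naturality properties from the {\sf OL}/{\sf OS} adjunction. Your additional remark that preservation of monadicity in both directions is needed to ensure the restricted maps are genuine bijections is a worthwhile clarification that the paper leaves implicit.
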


Recall that an \ts{os} $X$ is ortho-sober if each proper filter of $\mc{C}(X)$ is equal to $\{U:x\in U\}$ for some $x\in X$. This is equivalent to having $g_X:X\to\mc{FC}(X)$ be an isomorphism. The dual adjunction between {\sf OL} and {\sf OS} restricts to a dual equivalence between {\sf OL} and the full subcategory of ortho-sober \ts{os}s. 

\begin{cor}
There is a dual equivalence between {\sf mOL} and the full subcategory of {\sf mOS} consisting of ortho-sober monadic \ts{os}s. 
\end{cor}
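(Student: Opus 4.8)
The plan is to deduce the dual equivalence from the adjunction $\mc{F}\dashv\mc{C}$ between {\sf mOL} and {\sf mOS}$^{op}$ just established, using the general fact that any adjunction restricts to an equivalence between the full subcategories on which its unit and counit are isomorphisms. From the construction of the mutually inverse bijections $(\cdot)^-$ and $(\cdot)^+$, where $f^- = \mc{F}(f)\circ g_X$ and $\phi^+ = \mc{C}(\phi)\circ h_L$, the unit and counit of this adjunction are exactly the natural transformations $h$ (with components $h_L\colon L\to\mc{CF}(L)$) and $g$ (with components $g_X\colon X\to\mc{FC}(X)$), now regarded as an {\sf mOL}- and an {\sf mOS}-morphism respectively. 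So it suffices to identify, on each side, the objects at which these components are isomorphisms.

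On the {\sf mOL} side this is immediate: every $h_L$ is a monadic \ts{ol} isomorphism by the proposition above, so the relevant full subcategory is all of {\sf mOL}. On the {\sf mOS} side I would show that $g_X$ is a monadic \ts{os} isomorphism precisely when $X$ is ortho-sober. The condition of ortho-sobriety refers only to the proper filters of the \ts{ol} $\mc{C}(X)$, hence is insensitive to the quantifier; thus a monadic \ts{os} is ortho-sober iff its underlying \ts{os} is, which we have recorded to be equivalent to $g_X$ being an \ts{os}-isomorphism.

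The substantive point, and the step I expect to be the main obstacle, is to upgrade this to an isomorphism in {\sf mOS}: when $X$ is ortho-sober, $g_X$ is a bijective monadic \ts{os} embedding whose set-theoretic inverse $\psi=g_X^{-1}$ is an \ts{os}-morphism, and one must check that $\psi$ also satisfies the monadic morphism condition. Writing $R$ for the relation on $X$ and $S$ for the relation on $\mc{FC}(X)$, the monadic condition on $g_X$ reads $R[g_X^{-1}[\mc{V}]]=g_X^{-1}[S[\mc{V}]]$ for all $\mc{V}\in\mc{CFC}(X)$. Since $g_X$ restricts to a bijection of $\mc{C}(X)$ onto $\mc{CFC}(X)$, I would substitute $\mc{V}=g_X[V]$ for $V\in\mc{C}(X)$, simplify $g_X^{-1}[g_X[V]]=V$, and push forward along the surjection $g_X$ to obtain $S[g_X[V]]=g_X[R[V]]$. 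As $\psi^{-1}[V]=g_X[V]$ and $\psi^{-1}[R[V]]=g_X[R[V]]$, this is exactly the monadic morphism condition $S[\psi^{-1}[V]]=\psi^{-1}[R[V]]$ required of $\psi$. Hence $\psi$ is a monadic \ts{os} morphism and $g_X$ is an isomorphism in {\sf mOS}.

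With both collections of fixed objects identified, the general fixed-point theorem for adjunctions yields a dual equivalence between all of {\sf mOL} and the full subcategory of {\sf mOS} on ortho-sober monadic \ts{os}s, as claimed. Apart from the forward-image manipulation above, the only care needed is to keep the inverse-image phrasing of the monadic morphism condition straight, since it must be converted to a forward-image statement using the bijectivity of $g_X$; everything else reduces to the already-established non-monadic equivalence together with generalities about adjunctions.
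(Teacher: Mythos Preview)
Your proposal is correct and follows the same line the paper takes: the paper states the corollary without further argument, relying only on the preceding remark that ortho-sobriety is equivalent to $g_X$ being an isomorphism and that the adjunction then restricts to an equivalence. Your one addition is the explicit check that $g_X^{-1}$ satisfies the monadic morphism condition, a detail the paper leaves implicit; the manipulation you give (pushing the identity $R[g_X^{-1}[\mc{V}]]=g_X^{-1}[S[\mc{V}]]$ forward along the bijection $g_X$) is sound.
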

\section*{Declarations}
\subsection*{Data availability} Data sharing not applicable to this article as datasets were neither generated
nor analysed.
\subsection*{Compliance with ethical standards} The authors declare that they have no conflict of interest.

\end{document}